\let\oldproof\proof
\def\proof{\oldproof\unskip}
\newtheorem{thm}{Theorem}[section]
\newtheorem*{thm*}{Theorem}
\newtheorem{theorem}{Theorem}[section]
\newtheorem{lemma}[thm]{Lemma}
\newtheorem{prop}[thm]{Proposition}
\newtheorem{proposition}[thm]{Proposition}
\newtheorem{cor}[thm]{Corollary}
\theoremstyle{definition}
\newtheorem{dfn}[thm]{Definition}
\theoremstyle{remark}
\newtheorem*{ex}{Example}
\newtheorem*{remark}{Remark}
\newcommand{\Nat}{\mathbb{N}}
\newcommand{\Real}{\mathbb{R}}
\newcommand{\Torus}{\mathbb{T}}
\newcommand{\Int}{\mathbb{Z}}
\renewcommand{\d}{\mathrm{d}}
\newcommand{\id}{\mathrm{id}}
\renewcommand{\Re}{\operatorname{Re}}
\renewcommand{\Im}{\operatorname{Im}}
\DeclareMathOperator{\Leb}{L}
\DeclareMathOperator{\Ind}{\mathbf{1}}
\DeclareMathOperator{\supp}{supp}
\newcommand{\flim}[1]{\operatorname{\text{$#1$-}\!\lim}}
\newcommand{\setb}[2]{\left\{\,#1\,\middle|\,#2\,\right\}}
\newcommand{\setl}[1]{\{\,#1\,\}}
\newcommand{\sing}[1]{\{#1\}}
\title{On Wiener's lemma for locally compact abelian groups}
\author{Eike Schulte}
\address{Institute of Mathematics, University of Leipzig,
P.O. Box 100 920, 04009 Leipzig, Germany}
\email{schulte@math.uni-leipzig.de}
\keywords{Wiener’s lemma for measure sequences, density and filter convergence, extremal measures, operator semigroups}
\begin{document}

\begin{abstract}
Inspired by an extension of Wiener’s lemma on the relation of measures $\mu$ on the unit circle and their Fourier coefficients $\widehat{\mu}(k_n)$ along subsequences $(k_n)$ of the natural numbers by Cuny, Eisner and Farkas \cite{cuny_eisner_farkas_2019}, we study the validity of the lemma when the Fourier coefficients are weighted by a sequence of probability measures. By using convergence with respect to a filter derived from these measure sequences, we obtain similar results, now also allowing the consideration of locally compact abelian groups other than $\Torus$ and $\Real$. As an application, we present an extension of a result of Goldstein \cite{goldstein_1996} on the action of semigroups on Hilbert spaces.
\end{abstract}

\maketitle

\section{Introduction}

In \cite{cuny_eisner_farkas_2019}, Cuny, Eisner and Farkas consider a variant of a classical lemma of Wiener: For a complex measure $\mu$ on the unit circle $\Torus$, this lemma relates Cesàro sums of the Fourier coefficients $\widehat{\mu}(n)$ (defined for $n \in \Int$) to the values of $\mu$ on atoms (see Theorem \ref{thm:classic-wiener} below). In their variant, they replace the summation over all values $\widehat{\mu}(n)$ by summation only along some “good” subsequence $(k_n)$. Now the Fourier transform is defined on any locally compact abelian group, but because the dual group need not be discrete nor naturally ordered, there need not be a natural sequence of elements to sum along. To circumvent this problem, we consider sequences of (probability) measures (called “measure sequences” for short) on our group instead and define what it means for such a sequence to be “good”. 

In dealing with Cesàro summation it is often useful to be able to switch between convergence of these sums and a “sum-less” convergence, where, for example, products can be more easily treated. This is classically achieved by the Koopman-von Neumann lemma, relating convergence of Cesàro sums to “convergence in density”. A density function can be defined from our measure sequence similar to how the usual density of a set $J \subseteq \Nat$ can be defined as the limit of $\frac{1}{N}(\delta_1 + \dots + \delta_N)(J)$. However, convergence in density also relies on the natural order of $\Int$ (or rather, $\Nat$); we therefore have to replace it. This is achieved by considering convergence with respect to some filter, suitably defined from our new density (Proposition \ref{prop:koopman-von-neumann}).

Equipped with these constructions we see that Wiener’s lemma holds in a similar form as before (Theorem \ref{thm:my-wiener}). We can also consider “extremality” (in a similar sense to \cite{cuny_eisner_farkas_2019}) of measures and see that being extremal for a sufficiently good measure sequence implies being a Dirac measure.

Finally, we apply our lemma to obtain a version of a result of Goldstein about operators on Hilbert spaces. The classical case corresponds to the group $\Int$ (or rather, its subsemigroup $\Nat$), and a version for continuous one-parameter semigroups is given in \cite{cuny_eisner_farkas_2019}. Using our lemma, we get a version for a locally compact abelian group equipped with a suitable measure sequence and a sufficiently big subsemigroup (Theorem \ref{prop:semigroup-goldstein}).

The rest of the paper is organized as follows: In the remainder of this section, we will introduce basic notation used throughout the paper. In Section~2 we define measure sequences and prove the appropriate version of the Koopman-von Neumann lemma. In Section~3 we explain what it means for a measure sequence to be “good” and show the generalization of Wiener’s lemma for these sequences. In Section~4 we consider extremality and show that Dirac measures are characterized by the integrals over the Fourier coefficients with respect to suffeciently good measure sequences. Finally, in Section~5 we apply the earlier results to actions of semigroups and groups on Hilbert spaces and obtain a version of a theorem of Goldstein \cite{goldstein_1996}.

\subsection*{Acknowlegdements.}

The author thanks Tanja Eisner for bringing this subject to his attention and for reading a preliminary version of the paper.

\subsection*{Notation}

Let $G$ be a locally compact abelian group. We fix a Haar measure on $G$ and denote integration with respect to this measure by
\[
\int_G f(g)\,\mathrm{d}g.
\]
If $H$ is isomorphic to the Pontryagin dual of $G$ (hence also locally compact and abelian) via the isomorphism $\psi: H \to \widehat{G}$, we write $\langle g, h\rangle := \psi(h)(g)$ and we say that $(G, H)$ is a \emph{pair of dual groups}. The two most classical pairs of groups are $(\mathbb{T}, \mathbb{Z})$ (where $\mathbb{T}$ is the unit circle) with $\langle \lambda, n \rangle = \lambda^{-n}$ and $(\mathbb{R}, \mathbb{R})$ with $\langle r, s \rangle = \mathrm{e}^{-2\pi \mathrm{i} rs}$. Whenever such a pairing occurs, we will assume that the Haar measures on $G$ and $H$ are chosen such that the Fourier transform $\Leb^2(G) \to \Leb^2(H)$, $f \mapsto \widehat{f}$, given on the subspace $\Leb^1(G) \cap \Leb^2(G)$ by
\[
\widehat{f}(h) = \int_G \langle g, h \rangle f(g) \,\mathrm{d}g,
\]
is isometric. We will also use the Fourier transform
\[
\widehat{\mu}(h) = \int_G \langle g, h \rangle \mu(\mathrm{d}g)
\]
of complex measures $\mu$ on $G$.

We will occasionally use filter convergence. If $\mathcal{F}$ is a filter on $I$ and $f : I \to X$ is a function into some topological space, we write $\text{$\mathcal{F}$-}\lim_i f(i)$ for the limit of the image filter $i_*\mathcal{F}$ on $X$. In other words, $\text{$\mathcal{F}$-}\lim_i f(i)$ denotes the element $x \in X$ such that for every neighborhood $U$ of $x$ there exists a set $F \in \mathcal{F}$ with $f(i) \in U$ for every $i \in F$ (provided such an element $x$ exists and is unique).

\section{Measure sequences}

\begin{dfn}\label{def:measure-sequence}
A \emph{measure sequence} $\nu = (\nu_N)_{N \in \mathbb{N}}$ on some measurable space $X$ is a sequence of probability measures $\nu_N$ on $X$. If no confusion can arise, we will denote the $\Leb^2$-norm with respect to $\nu_N$ by $\Vert \cdot \Vert_N$.
\end{dfn}

Note that the sets of integrable functions can vary with $N$ but that they always contain all bounded measurable functions on $X$ because the measures are finite.

\begin{ex}
The following measure sequences allow to recover the prior results mentioned in the introduction:
\begin{enumerate}[label=\arabic*.]
\item On $\mathbb{N}$ with the power set as the sigma algebra, we have sequence of measures $\mathrm{u} = (\mathrm{u}_N)$ where
\[
\mathrm{u}_N(A) = \frac{\#(A \cap \setl{0, \dots, N - 1})}{N}.
\]
Integration against these measures gives the partial Cesàro sums. We will also consider $\mathrm{u}$ as a measure sequence on $\Int$.

\item Generalizing 1., for any locally compact group $G$ and Følner sequence $(F_N)$ on $G$ we consider
\[
    \nu_N(A) := \frac{\mathrm{meas}(A \cap F_N)}{\mathrm{meas}(F_N)}.
\]
In particular, we will denote by $\mathrm{u}_N$ the sequence corresponding to the Følner sequence $([0,N])_{N \in \Nat}$ on $\Real$.

\item For every measure sequence $\nu = (\nu_n)$ we can form its \emph{Cesàro transform} $\mathrm{C}\nu$ where
\[
    (\mathrm{C}\nu)_N = \frac 1 N \sum_{n=1}^{N} \nu_n.
\]
In particular, the measure sequence $\mathrm{u}$ on $\Nat$ is the Cesàro transform of $(\delta_{N-1})_N$.
\end{enumerate}
\end{ex}

\begin{dfn}
For a measure sequence $\nu$ on $X$ and a measurable subset $J \subseteq X$ the~\emph{density} of $J$ (with respect to $\nu$) is defined by
\[
    \mathrm{d}_\nu(J) := \lim_{N \to \infty} \nu_N(J)
\]
if this limit exists.
\end{dfn}

For example, using the measure sequence $\mathrm{u}_N$ on $\mathbb{N}$ we recover the usual density of subsets of $\mathbb{N}$. Using the sequence $(\delta_N)$ (or more generally, $(\delta_{k_N})$), a set has density $1$ if and only if it contains every $N \in \Nat$ greater than some $N_0$ (or all $k_N$ where $N > N_0$) and density $0$ iff it contains no $N$ greater than some $N_0$ (none of the elements $k_N$ where $N > N_0$). Otherwise, the density of a set is undefined for these sequences.

By the usual argument that convergence implies convergence of the Cesàro sums, the density function $\operatorname{d}_{\operatorname{C}\nu}$ for the Cesàro transform of a measure sequence $\nu$ is an extension of $\operatorname{d}_\nu$.

The convergence of Cesàro sums is related to “convergence in density” via the Koopman--von Neumann lemma \cite[Lemma 9.16]{eisner_farkas_haase_nagel_2015}. To obtain a similar result for measure sequences, we introduce the filter $\mathcal{F}_\nu$ generated by all subsets of $X$ with $\nu$-density~$1$.

\begin{prop}[Koopman--von Neumann for measure sequences]\label{prop:koopman-von-neumann}
Let $X$ be a measurable space, $\nu$ a measure sequence on $X$ and $f : X \to [0, \infty[$ a bounded measurable function. The following are equivalent:
\[
\text{(i)}~\lim_{N \to \infty} \int_X f(x) \nu_N(\mathrm{d}x) = 0, \qquad \text{(ii)}~\text{$\mathcal{F}_\nu$-}\lim_x f(x) = 0.
\]
\end{prop}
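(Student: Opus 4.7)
The plan is to adapt the classical Koopman--von Neumann argument once the filter $\mathcal{F}_\nu$ is described more concretely. First I would check that the measurable density-$1$ subsets of $X$ form a filter base: if $A, B \subseteq X$ both have density $1$, then $\nu_N(A \cap B) \geq \nu_N(A) + \nu_N(B) - 1$ and $\nu_N(A \cap B) \leq 1$, so the limit exists and equals $1$. Hence $\mathcal{F}_\nu$ is exactly the collection of supersets of density-$1$ sets, and the condition (ii) simply says that for every $\varepsilon > 0$ the sublevel set $\{x \in X : f(x) < \varepsilon\}$ contains some set of $\nu$-density $1$.

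For the implication (i) $\Rightarrow$ (ii) the key tool is Markov's inequality. Given $\varepsilon > 0$, since $f \geq 0$,
\[
\nu_N(\{f \geq \varepsilon\}) \leq \frac{1}{\varepsilon} \int_X f(x)\,\nu_N(\mathrm{d}x),
\]
and by assumption the right-hand side tends to $0$. Hence $\nu_N(\{f < \varepsilon\}) \to 1$, i.e.\ $\{f < \varepsilon\}$ itself has density $1$ and so lies in $\mathcal{F}_\nu$.

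For (ii) $\Rightarrow$ (i), fix $\varepsilon > 0$ and choose, by the concrete description of $\mathcal{F}_\nu$ above, a set $B$ with $\mathrm{d}_\nu(B) = 1$ and $B \subseteq \{f < \varepsilon\}$. Splitting the integral over $B$ and $X \setminus B$ and using that $f$ is bounded by $\|f\|_\infty$,
\[
\int_X f(x)\,\nu_N(\mathrm{d}x) \leq \varepsilon\, \nu_N(B) + \|f\|_\infty\, \nu_N(X \setminus B) \leq \varepsilon + \|f\|_\infty\bigl(1 - \nu_N(B)\bigr).
\]
Since $\nu_N(B) \to 1$, this yields $\limsup_{N \to \infty} \int f\,\mathrm{d}\nu_N \leq \varepsilon$, and letting $\varepsilon \to 0$ gives (i).

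I do not expect a serious obstacle here; the classical proof transfers almost verbatim. The only point requiring care is the precise structure of $\mathcal{F}_\nu$ (in particular that intersecting two density-$1$ sets again gives a density-$1$ set, so that $\mathcal{F}_\nu$ has a base of density-$1$ sets rather than merely finite intersections of them), and ensuring measurability of the sublevel sets, which follows from measurability of $f$.
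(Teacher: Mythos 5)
Your proof is correct and follows essentially the same route as the paper: Markov's inequality (equivalently, bounding $\mathbf{1}_{\{f \geq \varepsilon\}}$ by $f/\varepsilon$) for (i) $\Rightarrow$ (ii), and splitting the integral over a measurable density-$1$ set and its complement for (ii) $\Rightarrow$ (i). Your explicit verification that density-$1$ sets are closed under intersection, so that $\mathcal{F}_\nu$ has a base of measurable density-$1$ sets, is a detail the paper uses implicitly ("we may assume that $J$ is measurable with density $1$") and is worth having spelled out.
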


\begin{proof}
(i $\implies$ ii) For each $\varepsilon > 0$ consider the set $L = \setb{ x \in X }{ f(x) \geq \varepsilon }$. It is measurable and
\[
\mathrm{d}_\nu(L) = \lim_{N \to \infty} \int_X \mathbf{1}_L(x) \nu_N(\mathrm{d}x) \leq \lim_{N \to \infty} \int_X \frac{1}{\varepsilon} f(x) \nu_N(\mathrm{d}x) = 0.
\]
Hence, $X \setminus L$ has density $1$ and $f(x) < \varepsilon$ on this set. Because $\varepsilon$ is arbitrary, this implies convergence with respect to the filter $\mathcal{F}_\nu$.

(ii $\implies$ i) Let $M$ be an upper bound for $f$ and $\varepsilon > 0$. By filter convergence, there exists $J \in \mathcal{F}_\nu$ such that $f(x) < \varepsilon / 2$ for all $x \in J$. By the definition of the filter $\mathcal{F}_\nu$, we may assume that $J$ is measurable with density $1$. The set $L := X \setminus J$ has density $0$, so we can find $N_0$ such that $\nu_N(L) < \varepsilon/(2M)$ for all $N \geq N_0$. For these $N$ we have
\[
\int_X f(x) \nu_N(\mathrm{d}x) = \int_J f(x) \nu_N(\mathrm{d}x) + \int_L f(x) \nu_N(\mathrm{d}x) \leq \int_J \frac{\varepsilon}{2} \nu_N(\mathrm{d}x) + M \nu_N(L) \leq \varepsilon,
\]
implying convergence to zero.
\end{proof}

\begin{remark}
“Convergence in density” of a sequence $(a_n)$ to some limit $a$ in a topological space $A$ means that there is a set $J \subseteq \Nat$ of (classical) density $1$ such that $\lim_{n \in J, n \to \infty} a_n = a$. It appears to be a stronger condition than the filter convergence used above, as a single set $J$ of density $1$ has to be chosen once and for all (depending only on the sequence $(a_n)$) which is then intersected with sets of the form $\setl{N, N+1, \dots}$ (where $N$ depends on the neighborhood of the limit $a$ under consideration), whereas filter convergence allows the choice of a set of density $1$ independently for each neighborhood. However, if $A$ is a bounded interval $[0, M]$ the two concepts agree after all: Indeed, the classical lemma and our version show that both of them are equivalent to the convergence of $\lim_{N \to \infty} \frac{1}{N} \sum_{n=1}^N a_n$.

In fact, convergence in density and convergence with respect the filter we associate to the standard density function are equivalent if the topological space $A$ is merely first countable. This can be seen by a close examination of the proof of the classical Koopman--von Neumann lemma (cf. the implication (i $\implies$ iii) in \cite[Lemma 9.16]{eisner_farkas_haase_nagel_2015}) -- note also that this what makes the classical lemma hard, the proof of our version is quite a bit more straightforward.
\end{remark}

\section{Wiener’s lemma}

We consider complex measures $\mu$, finite by definition. Wiener’s lemma is the following classical result.

\begin{theorem}[Wiener’s lemma]\label{thm:classic-wiener}
Let $\mu$ be a complex Borel measure on $\mathbb{T} = \setb{ z \in \mathbb{C} }{ \left|z\right| = 1 }$. Then,
\[
    \lim_{N \to \infty} \frac{1}{N} \sum_{n=1}^N \left|\widehat{\mu}(n)\right|^2 = \sum_{\text{$a$ atom}} \left|\mu\sing{a}\right|^2
\]
where the sum on the right is over all atoms of $\mu$ (i.e. points $a \in \mathbb{T}$ such that $\mu\sing{a} \neq 0$) and $\widehat{\mu}$ denotes the Fourier transform of $\mu$.
\end{theorem}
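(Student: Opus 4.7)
The plan is to represent $|\widehat{\mu}(n)|^2$ as a double integral against the product measure $\mu \otimes \bar\mu$, pass the Cesàro limit inside via dominated convergence, and identify the remaining expression with the mass of the diagonal, which in turn equals the sum of $|\mu\sing{a}|^2$ over atoms.

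First, I would expand $|\widehat{\mu}(n)|^2 = \widehat{\mu}(n)\overline{\widehat{\mu}(n)}$ and use $\overline{\lambda^{-n}} = \lambda^n$ on $\Torus$. Fubini's theorem (applied to the four positive-measure components of $\mu \otimes \bar\mu$ obtained via the Jordan decomposition) then yields
\[
    |\widehat{\mu}(n)|^2 = \int_\Torus\!\int_\Torus (\eta/\lambda)^n\,\d\mu(\lambda)\,\d\bar\mu(\eta),
\]
and after averaging over $n=1,\dots,N$ and exchanging the finite sum with the integrals, the Cesàro mean takes the form $\int_\Torus\!\int_\Torus \frac{1}{N}\sum_{n=1}^N (\eta/\lambda)^n\,\d\mu(\lambda)\,\d\bar\mu(\eta)$.

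Next, I would invoke the geometric sum formula: for $z \in \Torus$ one has $\frac{1}{N}\sum_{n=1}^N z^n = 1$ if $z = 1$, while $\bigl|\frac{1}{N}\sum_{n=1}^N z^n\bigr| \leq \frac{2}{N|1-z|} \to 0$ otherwise. Hence the integrand converges pointwise to $\one_{\sing{\eta=\lambda}}$, and the triangle inequality bounds it by $1$ uniformly in $N$. Since $|\mu \otimes \bar\mu|$ is finite, dominated convergence produces
\[
    \lim_{N \to \infty} \frac{1}{N}\sum_{n=1}^N |\widehat{\mu}(n)|^2 = (\mu \otimes \bar\mu)(\Delta),
\]
where $\Delta := \setb{(\lambda, \lambda)}{\lambda \in \Torus}$.

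Finally, another application of Fubini reduces the diagonal mass to $(\mu \otimes \bar\mu)(\Delta) = \int_\Torus \bar\mu\sing{\lambda}\,\d\mu(\lambda)$. Since $\bar\mu\sing{\lambda}$ vanishes off the (at most countable) atom set of $\mu$, this integral collapses to $\sum_a \mu\sing{a}\bar\mu\sing{a} = \sum_a |\mu\sing{a}|^2$, as claimed. I do not anticipate any genuine obstacle; the only step requiring care is applying Fubini and dominated convergence to the complex measure $\mu \otimes \bar\mu$, which is handled routinely by passing to its Jordan decomposition into four positive finite measures.
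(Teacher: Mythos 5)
Your argument is correct, and it is essentially the route the paper takes: the paper does not prove Theorem~\ref{thm:classic-wiener} directly but derives it from Theorem~\ref{thm:my-wiener} (parts (1) and (4) with the ergodic sequence $\mathrm{u}$), whose proof is exactly your combination of Fubini, the double integral against $\mu\otimes\overline{\mu}$, dominated convergence of the averaged kernel to the indicator of the diagonal, and a final Fubini to collapse the diagonal mass to the sum over atoms. Your specialization to $(\Torus,\Int)$ with the explicit geometric-sum bound is a clean, self-contained instance of that same proof.
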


Inspired by the work of Cuny, Eisner and Farkas \cite{cuny_eisner_farkas_2019}, we will deduce a version of this lemma that uses the measure sequences introduced in the previous section.

\begin{dfn}\label{def:c-function}
Let $(G, H)$ be a pair of dual groups und $\nu$ a measure sequence on $H$. The sequence is called \emph{good} if the limit
\[
    \mathrm{c}_\nu(g) = \lim_{N \to \infty} \int_H \left< g, h \right> \nu_N(\mathrm{d}h)
\]
exists for every $g \in G$. In this case, the set $\setb{ g \in G }{ \mathrm{c}_\nu(g) \neq 0 }$ is called the \emph{spectrum} of $\nu$. If the measuare sequence is good and its spectrum is the set $\sing{ 1 }$, the sequence is called \emph{ergodic}.
\end{dfn}

Note that we always have $\mathrm{c}_\nu(1) = 1$ and $\mathrm{c}_\nu(g^{-1}) = \overline{\mathrm{c}_\nu(g)}$ because $\langle h, g^{-1} \rangle = \overline{\langle h, g \rangle}$. If the limit $\mathrm{c}_\nu(g)$ exists, its absolute value is at most $1$.

We now provide examples of good measure sequences.

\begin{proposition}
Let $(G, H)$ be a pair of dual groups and $\nu = (\nu_N)$ a measure sequence on $H$ derived from a Følner sequence $(F_N)$. Then $\nu$ is good.
\end{proposition}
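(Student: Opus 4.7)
The plan is to fix $g \in G$ and analyse the quantity
\[
I_N(g) := \int_H \langle g, h \rangle \, \nu_N(\mathrm{d}h) = \frac{1}{\mathrm{meas}(F_N)} \int_{F_N} \langle g, h \rangle \, \mathrm{d}h.
\]
Because $h \mapsto \langle g, h \rangle$ is a character of $H$, the overall strategy is the familiar one for averages along a Følner sequence: exploit $\langle g, h + h_0 \rangle = \langle g, h \rangle \langle g, h_0 \rangle$ together with the translation-invariance of the Følner sets to deduce that $(1 - \langle g, h_0 \rangle) I_N(g) \to 0$ for every $h_0 \in H$, and then separate the case where $g$ pairs trivially with all of $H$ from the case where it does not.

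For the key computation, I would fix $h_0 \in H$ and use translation-invariance of the Haar measure on $H$ to write
\[
\langle g, h_0 \rangle \, I_N(g) = \frac{1}{\mathrm{meas}(F_N)} \int_{F_N} \langle g, h + h_0 \rangle \, \mathrm{d}h = \frac{1}{\mathrm{meas}(F_N)} \int_{F_N + h_0} \langle g, h \rangle \, \mathrm{d}h.
\]
Subtracting the original expression for $I_N(g)$ and using $|\langle g, h \rangle| = 1$, one obtains
\[
\bigl| (1 - \langle g, h_0 \rangle) I_N(g) \bigr| \leq \frac{\mathrm{meas}\bigl( F_N \triangle (F_N + h_0) \bigr)}{\mathrm{meas}(F_N)},
\]
which tends to $0$ by the defining property of a Følner sequence applied to the compact set $\{h_0\}$.

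The proof then splits into two cases. If $\langle g, h \rangle = 1$ for every $h \in H$, then $g$ induces the trivial character of $H$, so by Pontryagin duality $g = 1$ in $G$; in this case $I_N(g) = 1$ for all $N$ and the limit is trivially $1$. Otherwise, choose any $h_0$ with $\langle g, h_0 \rangle \neq 1$; the estimate above then gives $I_N(g) \to 0$. In either case the limit $\mathrm{c}_\nu(g)$ exists, which is precisely what it means for $\nu$ to be good. As a bonus, the same argument shows $\mathrm{c}_\nu(g) = 0$ for all $g \neq 1$, so a Følner-derived measure sequence is automatically ergodic in the sense of Definition~\ref{def:c-function}. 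I do not expect any real obstacle beyond keeping track of the sign conventions in the pairing and confirming that the single-point version of the Følner condition suffices (which follows from the compact-set definition applied to $K = \{h_0\}$).
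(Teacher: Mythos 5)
Your proof is correct, but it takes a genuinely different route from the paper. The paper treats $h \mapsto \langle g, h\rangle$ as a unitary action of $H$ on $\Leb^2(\Torus)$ (multiplication by $\langle g,h\rangle$) and invokes the mean ergodic theorem for amenable groups to get convergence of $(A_{F_N}f\,|\,f)$ for $f = \id$, which is exactly $\mathrm{c}_\nu(g)$; the existence of the limit then comes for free from convergence of the ergodic averages in $\Leb^2$. You instead run the elementary Følner cancellation argument directly on the scalar averages: the identity $\langle g, h_0\rangle I_N(g) = \frac{1}{\operatorname{meas}(F_N)}\int_{F_N + h_0}\langle g,h\rangle\,\mathrm{d}h$ plus the bound by $\operatorname{meas}(F_N \triangle (F_N + h_0))/\operatorname{meas}(F_N)$ is exactly the standard proof of the mean ergodic theorem specialized to a one-dimensional representation, so in effect you have inlined the black box the paper cites. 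The case split is sound: if $\langle g,\cdot\rangle$ is the trivial character of $H$ then Pontryagin duality (injectivity of the evaluation map $G \to \widehat{\widehat{G}}$ composed with the isomorphism $\psi$) forces $g = 1$ and $I_N(1) \equiv 1$, and otherwise any $h_0$ with $\langle g,h_0\rangle \neq 1$ kills the limit. Your approach buys two things: it is self-contained (no appeal to Greenleaf), and it identifies the limit explicitly as $\mathrm{c}_\nu = \Ind_{\sing{1}}$, i.e.\ it shows Følner-derived measure sequences are not merely good but \emph{ergodic} in the sense of Definition~\ref{def:c-function} --- a strictly stronger conclusion than the proposition states (and one consistent with the paper's remark that Wiener's original lemma amounts to ergodicity of $\mathrm{u}$). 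The paper's approach buys brevity and would generalize to situations where one wants convergence of averages of more general matrix coefficients, not just characters. The only points to tidy are notational: the paper writes its groups multiplicatively, so the translate should be $F_N h_0$, and you should state which version of the Følner condition you assume (the single-translate version suffices and follows from the compact-set version applied to $K = \sing{h_0}$, as you note).
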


\begin{proof}
Let $g \in G$. The dual pairing gives an action of $H$ on $\Torus$: $h \in H$ acts on $\lambda \in \Torus$ via multiplication by $\langle g, h \rangle$. Consider the identity function on $f = \id \in \Leb^2(\Torus)$. By the mean ergodic theorem for amenable groups \cite[Corollary 3.4]{greenleaf_1973}, the ergodic means $A_{F_N} f$ of $f$, given by
\[
    (A_{F_N} f)(\lambda) := \frac{1}{\operatorname{meas}(F_N)} \int_{F_N} \langle g, h \rangle \lambda \d h,
\]
converge to some function $\tilde f$ (in $\Leb^2$). Hence,
\begin{align*}
    \operatorname{c}_\nu(g)
        &= \lim_{N \to \infty} \frac 1 {\operatorname{meas}(F_N)} \int_{F_N} \langle g, h \rangle \d h
        = \lim_{N \to \infty} \int_\Torus \frac 1 {\operatorname{meas}(F_N)} \int_{F_N} \langle g, h \rangle \lambda \overline{\lambda} \d h \d \lambda \\
        &= \lim_{N \to \infty} \int_\Torus (A_{F_N}f)(\lambda) \overline{f(\lambda)} \d \lambda
        = \lim_{N \to \infty} (A_{F_N}f|f)
        = (\tilde f|f),
\end{align*}
in particular, the limit exists.
\end{proof}

\begin{lemma}
If $\nu$ is a good (or ergodic) measure sequence, so is its Cesàro transform $\mathrm{C}\nu$.
\end{lemma}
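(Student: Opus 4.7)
The plan is to observe that the defining quantity in Definition \ref{def:c-function} is linear in the measure $\nu_N$, so that Cesàro averaging over $N$ commutes with the integral against $\langle g, h\rangle$. Explicitly, for any $g \in G$,
\[
\int_H \langle g, h\rangle\, (\mathrm{C}\nu)_N(\mathrm{d}h) = \frac{1}{N}\sum_{n=1}^{N} \int_H \langle g, h\rangle\, \nu_n(\mathrm{d}h),
\]
which is the $N$-th Cesàro average of the sequence $a_n(g) := \int_H \langle g, h\rangle\, \nu_n(\mathrm{d}h)$.

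Now the hypothesis that $\nu$ is good means exactly that $a_n(g) \to \mathrm{c}_\nu(g)$ as $n \to \infty$ for every $g \in G$. Applying the classical (scalar) fact that convergence of a sequence implies convergence of its Cesàro averages to the same limit, I conclude that the limit defining $\mathrm{c}_{\mathrm{C}\nu}(g)$ exists and equals $\mathrm{c}_\nu(g)$ for every $g$. Hence $\mathrm{C}\nu$ is good, and in fact $\mathrm{c}_{\mathrm{C}\nu} = \mathrm{c}_\nu$ pointwise.

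The ergodicity statement is then immediate: since $\mathrm{c}_{\mathrm{C}\nu} = \mathrm{c}_\nu$, the two spectra coincide, so if $\nu$ has spectrum $\{1\}$, so does $\mathrm{C}\nu$. There is essentially no obstacle here; the only thing worth spelling out is interchanging the finite sum and the integral, which is trivial because the integrand $\langle g, h\rangle$ is bounded and each $\nu_n$ is a probability measure. The result is really just the observation that the Cesàro transform on measure sequences is intertwined by the Fourier-type functional $\nu_N \mapsto \int \langle g, h\rangle\, \nu_N(\mathrm{d}h)$ with the ordinary scalar Cesàro transform.
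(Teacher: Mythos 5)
Your proof is correct and is precisely the argument the paper invokes (the paper's proof consists of the single sentence that this is ``the usual argument that convergence implies Cesàro convergence''). You have simply written out the details: linearity in $\nu_N$ lets the finite sum pass through the integral, and then the scalar Cesàro convergence fact applies pointwise in $g$.
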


\begin{proof}
This is the usual argument that convergence implies Cesàro convergence.
\end{proof}

If $\mu$ and $\nu$ are measure sequences on groups $G$, $H$, we can obtain the measure sequence $\mu \otimes \nu$ on $G \times H$ by taking the product measure $\mu_N \otimes \nu_N$ for each $N$. By Fubini’s theorem, we have
\[
    \mathrm{c}_{\mu \otimes \nu}(g, h) = \mathrm{c}_\mu(g) \mathrm{c}_\nu(h),
\]
provided that both terms on the right-hand side exist. Hence the product sequence is good if both factors are.

\begin{lemma}\label{lem:c-filter-equivalent}
For $g \in G$, the following are equivalent:
\[
\text{(i)}~\left|\mathrm{c}_\nu(g)\right| = 1, \qquad
\text{(ii) the limit $\text{$\mathcal{F}_\nu$-}\lim_h \langle g, h \rangle$ exists}.
\]
If (i) and (ii) hold, the limit of (ii) is equal to $\mathrm{c}_\nu(g)$.
\end{lemma}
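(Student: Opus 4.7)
The plan is to reduce both implications to the Koopman--von Neumann lemma (Proposition~\ref{prop:koopman-von-neumann}) applied to the bounded non-negative function $f(h) = |\langle g, h\rangle - \lambda|^2$, where $\lambda$ will be $\mathrm{c}_\nu(g)$ in one direction and the filter limit in the other. Since $\langle g, h\rangle$ and $\lambda$ both lie in $\Torus$, expanding the square and using $|\langle g, h\rangle|^2 = 1 = |\lambda|^2$ yields the identity
\begin{equation*}
    \int_H |\langle g, h\rangle - \lambda|^2\, \nu_N(\d h)
    = 2 - 2\Re\!\left(\overline{\lambda}\int_H \langle g, h\rangle\, \nu_N(\d h)\right),
\end{equation*}
which directly links the integral on the left (controlled by Proposition~\ref{prop:koopman-von-neumann}) with the integral defining $\mathrm{c}_\nu(g)$ on the right.

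For the direction (i $\Rightarrow$ ii), I would set $\lambda = \mathrm{c}_\nu(g)$; by hypothesis $|\lambda| = 1$. Passing to $N \to \infty$ in the identity and using the definition of $\mathrm{c}_\nu$ gives $\int_H f(h)\,\nu_N(\d h) \to 2 - 2|\lambda|^2 = 0$. Since $f$ is bounded (by $4$) and non-negative, Proposition~\ref{prop:koopman-von-neumann} converts this into $\F_\nu\text{-}\lim_h f(h) = 0$, which is exactly $\F_\nu\text{-}\lim_h \langle g, h\rangle = \lambda$.

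For the direction (ii $\Rightarrow$ i), let $\lambda$ denote the $\F_\nu$-limit of $\langle g, h\rangle$; closedness of $\Torus$ forces $|\lambda| = 1$. Then $f$ is bounded and has $\F_\nu$-limit $0$, so Proposition~\ref{prop:koopman-von-neumann} gives $\int_H f(h)\,\nu_N(\d h) \to 0$. The identity now forces $\Re\!\left(\overline{\lambda}\int_H \langle g, h\rangle\,\nu_N(\d h)\right) \to 1$; since each $\overline{\lambda}\int_H \langle g, h\rangle\,\nu_N(\d h)$ has modulus at most $1$ (by the triangle inequality for integrals), every subsequential limit must equal $1$, so $\int_H \langle g, h\rangle\,\nu_N(\d h) \to \lambda$. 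This establishes both the existence of $\mathrm{c}_\nu(g)$ and the equalities $\mathrm{c}_\nu(g) = \lambda$, $|\mathrm{c}_\nu(g)| = 1$.

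The only substantive choice in the argument is to take the square $|\langle g, h\rangle - \lambda|^2$ rather than $|\langle g, h\rangle - \lambda|$: via $|\langle g, h\rangle|^2 = 1 = |\lambda|^2$ its integral collapses into a $\mathbb{C}$-linear expression in $\int_H \langle g, h\rangle\,\nu_N(\d h)$, so the two directions become mirror-image applications of the Koopman--von Neumann lemma. I do not anticipate any genuine obstacle beyond noticing this.
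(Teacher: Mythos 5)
Your argument is correct and is essentially the paper's proof in expanded form: since $|\langle g,h\rangle-\lambda|^2 = 2-2\Re\bigl(\overline{\lambda}\langle g,h\rangle\bigr)$ for unimodular $\langle g,h\rangle$ and $\lambda$, your squared-distance function is exactly (twice) the ``passing to real parts'' device the paper invokes, and your final subsequential-limit step is the paper's observation that numbers of modulus at most $1$ converge to $1$ iff their real parts do. The paper merely states this in one sentence, so your write-up is a faithful unpacking rather than a different route.
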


\begin{proof}
This follows from Proposition \ref{prop:koopman-von-neumann} by passing to the real parts and using the fact that complex numbers with absolute value at most $1$ converge to $1$ if and only if the real parts converge to $1$.
\end{proof}

\begin{prop}
The set $\Gamma_\nu := \setb{ g \in G }{ \left|c_\nu(g)\right| = 1 }$ is a subgroup of $G$ and $\mathrm{c}_\nu$ is a group homomorphism $\Gamma_\nu \to \Torus$.
\end{prop}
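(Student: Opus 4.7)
The plan is to exploit Lemma \ref{lem:c-filter-equivalent} to translate the condition $|c_\nu(g)| = 1$ into the existence of the filter limit $\mathcal{F}_\nu\text{-}\lim_h \langle g, h\rangle$, and then use the fact that filter limits respect continuous operations (in this case, multiplication and complex conjugation on $\Torus$) together with the character property $\langle g_1 g_2, h\rangle = \langle g_1, h\rangle \langle g_2, h\rangle$.

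First I would verify the three subgroup axioms. For the identity: $c_\nu(1) = 1$ by the remark right after Definition \ref{def:c-function}, so $1 \in \Gamma_\nu$. For closure under inverses: the same remark gives $c_\nu(g^{-1}) = \overline{c_\nu(g)}$, so $|c_\nu(g^{-1})| = |c_\nu(g)|$, and thus $g \in \Gamma_\nu$ implies $g^{-1} \in \Gamma_\nu$.

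For closure under the group operation, let $g_1, g_2 \in \Gamma_\nu$. By Lemma \ref{lem:c-filter-equivalent}, both filter limits $\mathcal{F}_\nu\text{-}\lim_h \langle g_i, h\rangle = c_\nu(g_i)$ exist in $\Torus$. Since multiplication $\Torus \times \Torus \to \Torus$ is continuous, and since
\[
\langle g_1 g_2, h\rangle = \langle g_1, h\rangle \cdot \langle g_2, h\rangle
\]
by the character property of the dual pairing, the filter limit $\mathcal{F}_\nu\text{-}\lim_h \langle g_1 g_2, h\rangle$ exists and equals $c_\nu(g_1) c_\nu(g_2)$. Applying Lemma \ref{lem:c-filter-equivalent} in the other direction then yields $|c_\nu(g_1 g_2)| = 1$, so $g_1 g_2 \in \Gamma_\nu$, together with the homomorphism identity $c_\nu(g_1 g_2) = c_\nu(g_1) c_\nu(g_2)$. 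Combined with $c_\nu(1) = 1$, this shows $c_\nu|_{\Gamma_\nu}$ is a group homomorphism into $\Torus$.

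There is essentially no hard step: the only thing one needs to be a bit careful about is that ``limits of products are products of limits'' carries over to filter convergence, which is immediate from continuity of multiplication on $\Torus$ and the definition of filter limit via image filters given in the notation section.
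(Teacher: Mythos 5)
Your proof is correct and follows the same route as the paper: identity and inverses from the remarks after Definition \ref{def:c-function}, and closure under products plus the homomorphism identity via Lemma \ref{lem:c-filter-equivalent} and the continuity of multiplication on $\Torus$. You simply spell out the details that the paper leaves implicit.
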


\begin{proof}
The fact that the product of two elements from $\Gamma_\nu$ lies in $g$ follows from the equivalent description in Lemma \ref{lem:c-filter-equivalent} because the product of two limits is the limit of the product by continuity of multiplication in $\Torus$. The rest of the assertion follows from the remarks following Definition \ref{def:c-function}.
\end{proof}

\begin{dfn}
We say that a measure sequence $\nu$ on a topological space $X$ \emph{goes to infinity} if $\lim_{N\to\infty} \nu_N(K) = 0$ for every compact set $K \subseteq X$.
\end{dfn}

\begin{lemma}\label{lem:almost-everywhere-zero}
Let $\nu$ be a good measure sequence that goes to infinity. Then $\mathrm{c}_\nu(g) = 0$ for almost every $g \in G$.
\end{lemma}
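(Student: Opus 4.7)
The plan is to test $\mathrm{c}_\nu$ against a dense family of functions. Since $\mathrm{c}_\nu$ is a pointwise limit of the continuous functions $g \mapsto \int_H \langle g, h\rangle \nu_N(\mathrm{d}h)$ and is bounded by $1$, it is a bounded measurable (hence locally integrable) function on $G$. It will therefore suffice to show that $\int_G \mathrm{c}_\nu(g)\,\phi(g)\,\mathrm{d}g = 0$ for every $\phi \in C_c(G)$; separating real and imaginary parts and integrating over super- and sublevel sets of $\mathrm{Re}\,\mathrm{c}_\nu$ and $\mathrm{Im}\,\mathrm{c}_\nu$ inside compacts then forces $\mathrm{c}_\nu = 0$ almost everywhere.

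Fix $\phi \in C_c(G)$ and set $f_N(g) := \int_H \langle g, h\rangle\,\nu_N(\mathrm{d}h)$, so that $|f_N| \leq 1$ and $f_N \to \mathrm{c}_\nu$ pointwise. Because $|\phi|$ is an integrable dominating function, the dominated convergence theorem gives
\[
\int_G \mathrm{c}_\nu(g)\,\phi(g)\,\mathrm{d}g = \lim_{N \to \infty} \int_G f_N(g)\,\phi(g)\,\mathrm{d}g.
\]
Fubini's theorem applies to the right-hand side (the integrand is bounded in absolute value by $|\phi(g)|$, which is integrable against the product measure $\mathrm{d}g \otimes \nu_N$), turning it into
\[
\lim_{N \to \infty} \int_H \widehat{\phi}(h)\,\nu_N(\mathrm{d}h).
\]

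The final step, which I expect to be the main ingredient, is to show this last limit vanishes. Since $\phi \in L^1(G)$, the Riemann--Lebesgue lemma for locally compact abelian groups yields $\widehat{\phi} \in C_0(H)$. Given $\varepsilon > 0$, choose a compact $K \subset H$ with $|\widehat{\phi}(h)| < \varepsilon$ for $h \notin K$; then
\[
\left|\int_H \widehat{\phi}(h)\,\nu_N(\mathrm{d}h)\right| \leq \|\widehat{\phi}\|_\infty\,\nu_N(K) + \varepsilon.
\]
Because $\nu$ goes to infinity, $\nu_N(K) \to 0$, so the $\limsup$ of the left-hand side is at most $\varepsilon$, and as $\varepsilon$ was arbitrary the limit is zero. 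The only non-routine piece is this appeal to Riemann--Lebesgue on a general LCA group; the remainder is standard measure-theoretic bookkeeping.
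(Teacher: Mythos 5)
Your proposal is correct and follows essentially the same route as the paper: test $\mathrm{c}_\nu$ against integrable functions, swap limit and integral by dominated convergence and Fubini, and use that $\widehat{\phi}$ vanishes at infinity together with the ``goes to infinity'' hypothesis to kill the limit. The only differences are cosmetic — you use $C_c(G)$ rather than all of $\Leb^1(G)$ as test functions and you spell out the final ``a locally integrable function orthogonal to $C_c(G)$ vanishes a.e.'' step, which the paper leaves implicit.
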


\begin{proof}
Let $\varphi \in \Leb^1(G)$ be a test function. Its Fourier transform $\widehat{\varphi}$ is a continuous function vanishing at infinity \cite[(31.5)]{hewitt_ross_1970}. Hence, 
\[
    \int_G \mathrm{c}_\nu(g) \varphi(g) \d g
        = \int_G \lim_{N \to \infty} \int_H \langle g, h \rangle \varphi(g) \nu_N(\d h) \d g
        = \lim_{N \to \infty} \int_H \widehat{\varphi}(h) \nu_N(\d h) = 0,
\]
where we use Fubini and dominated convergence for the second equality and that $\nu$ goes to infinity and $\widehat{\varphi}$ vanishes at infinity for the last one. As $\varphi$ is arbitrary, the claim follows.
\end{proof}

\begin{cor}
Let $\nu$ be a good measure sequence going to infinity on $H = \Int$ or $H = \Real$. Then $\Gamma_\nu$ is discrete.
\end{cor}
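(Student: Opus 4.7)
The plan is to deduce discreteness by first showing that $\Gamma_\nu$ is a closed subgroup of $G$ (where $G=\Torus$ or $G=\Real$ is the dual of $H$), and then invoking the classification of closed subgroups of $\Real$ and $\Torus$. The starting point is Lemma~\ref{lem:almost-everywhere-zero}: since $\nu$ is good and goes to infinity, $\mathrm{c}_\nu=0$ almost everywhere on $G$, and in particular $\Gamma_\nu=\{g\in G:|\mathrm{c}_\nu(g)|=1\}$ has Haar measure zero, so the ``full group'' case will be ruled out at the end.

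To obtain closedness, I would first observe that $\mathrm{c}_\nu$, being the pointwise limit of the continuous characteristic functions $\widehat{\nu_N}$, is of Baire class $1$ on $G$; so is its modulus $|\mathrm{c}_\nu|$. Because $|\mathrm{c}_\nu|\leq 1$ everywhere, one has
$\Gamma_\nu=\{g\in G:|\mathrm{c}_\nu(g)|\geq 1\}=\bigcap_{k\geq 1}\{g:|\mathrm{c}_\nu(g)|>1-1/k\}$,
and since strict sublevel sets of a Baire class $1$ function are $F_\sigma$, this exhibits $\Gamma_\nu$ as a $G_\delta$ subset of $G$. A short Baire category argument then shows that any $G_\delta$ subgroup $H$ of a Polish group is closed: inside its closure $\overline H$ (itself Polish) the set $H$ is a dense $G_\delta$, so any nontrivial coset $gH$ would also be a dense $G_\delta$ in $\overline H$, contradicting the fact that two disjoint dense $G_\delta$ sets cannot coexist in a Polish space. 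Applying this to $\Gamma_\nu\subseteq G$ shows that $\Gamma_\nu$ is closed.

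With closedness in hand, the classical classifications finish the proof: closed subgroups of $\Real$ are $\{0\}$, $a\Int$ for some $a>0$, or $\Real$; closed subgroups of $\Torus$ are finite cyclic subgroups or $\Torus$. In either case, the only non-discrete option is the full group, and that is excluded by the measure-zero conclusion from Lemma~\ref{lem:almost-everywhere-zero}. The main obstacle is the $G_\delta$-subgroup-of-Polish-group step, which is a standard descriptive-set-theoretic fact but does need a short Baire-category justification; by contrast, recognising $\Gamma_\nu$ as a $G_\delta$ set is essentially automatic once the Baire class $1$ property of $\mathrm{c}_\nu$ is pointed out, and the classification of closed subgroups is classical.
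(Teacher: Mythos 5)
Your proof is correct, but it reaches discreteness by a different route than the paper, even though both arguments start from the same two ingredients (the Baire class $1$ property of $\mathrm{c}_\nu$ and Lemma~\ref{lem:almost-everywhere-zero}). The paper uses the dichotomy that a subgroup of $\Real$ or $\Torus$ is either discrete or dense, and then rules out density directly: a Baire class $1$ function has a point of continuity, $\mathrm{c}_\nu$ must vanish there (else it would be nonzero on an open set, contradicting the a.e.-zero lemma), so $|\mathrm{c}_\nu|<1$ on a whole neighborhood and $\Gamma_\nu$ misses an open set. You instead show $\Gamma_\nu$ is a $G_\delta$ subgroup, invoke the descriptive-set-theoretic fact that a $G_\delta$ subgroup of a Polish group is closed, and then use the classification of closed subgroups together with the measure-zero conclusion to exclude the full group. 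Both are sound; yours costs an extra (standard but nontrivial) Baire-category lemma about $G_\delta$ subgroups, while the paper's is more self-contained given the same inputs. One small wrinkle in your write-up: the displayed identity $\Gamma_\nu=\bigcap_{k\ge1}\{|\mathrm{c}_\nu|>1-1/k\}$ does not by itself yield the $G_\delta$ property, since each strict superlevel set is only $F_\sigma$ and a countable intersection of $F_\sigma$ sets need not be $G_\delta$; the correct (and intended) justification is that $\{|\mathrm{c}_\nu|<1\}$, being the preimage of an open set under a Baire class $1$ function, is $F_\sigma$, so its complement $\Gamma_\nu=\{|\mathrm{c}_\nu|\ge1\}$ is $G_\delta$. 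With that phrasing fixed, the argument goes through.
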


\begin{proof}
Subgroups of the relevant dual groups $\Torus$ and $\Real$ are either discrete or dense. As $\mathrm{c}_\nu$ is the pointwise limit of continuous functions, it is of Baire class $1$ and has points of continuity. At these points, $\mathrm{c}_\nu$ must be zero, for otherwise it would be non-zero on a non-empty open subset, contradicting Lemma \ref{lem:almost-everywhere-zero}. By continuity, the absolute value of $\mathrm{c}_\nu$ is less than $1$ on a neighborhood of these points. Thus $\Gamma_\nu$ cannot be dense.
\end{proof}

We are now ready to prove our first theorem: this is the promised generalization of Wiener’s lemma.

\begin{thm}\label{thm:my-wiener}
Let $(G, H)$ be a pair of dual groups and $\nu$ a measure sequence on $H$.
\begin{enumerate}
\item  Let $\mu$ be a complex Borel measure on $G$. Then
\[
    \lim_{N \to \infty} \left\Vert \widehat{\mu} \right\Vert_N^2 = \int_{G \times G} \mathrm{c}_\nu(g_0g_1^{-1}) (\mu \otimes \overline{\mu})(\d(g_0,g_1)).
\]
\item Let $\mu$ be a probability measure on $G$. If $\lim_{N \to \infty} \left\Vert\widehat{\mu}\right\Vert_N^2 = 1$ then the support of $\mu$ is contained in a coset of $\overline{\Gamma_\nu}$ and $\mathrm{c}_\nu(g_0 g_1^{-1}) = 1$ for every pair of $\mu$-atoms $g_0$, $g_1$.
\item Let $\mu$ be a discrete probability measure on $G$. If $\mathrm{c}_\nu(g_0 g_1^{-1}) = 1$ for every pair of $\mu$-atoms $g_0$, $g_1$ then
\[
    \lim_{N \to \infty} \left\Vert\widehat{\mu}\right\Vert_N^2 = 1.
\]
\item The measure sequence $\nu$ is ergodic if and only if
\[
    \lim_{N \to \infty} \left\Vert\widehat{\mu}\right\Vert_N^2 = \sum_{\text{$a$ atom}} \left|\mu\sing{a}\right|^2
\]
for every complex Borel measure $\mu$ on $G$.
\item Let $\nu$ be ergodic and $\mu$ be a probability measure on $G$. Then
\[
    \lim_{N \to \infty} \left\Vert\widehat{\mu}\right\Vert_N^2 = 1
\]
if and only if $\mu$ is a Dirac measure (i.e. $\mu = \delta_g$ for some $g \in G$).
\end{enumerate}
\end{thm}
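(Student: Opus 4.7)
The plan is to prove (1) as the core computation — a Fubini-plus-dominated-convergence argument — and then deduce (2)--(5) from it. For (1), start from
\[
\left|\widehat{\mu}(h)\right|^2 = \widehat{\mu}(h) \overline{\widehat{\mu}(h)} = \int_{G \times G} \langle g_0 g_1^{-1}, h \rangle \, (\mu \otimes \bar\mu)(\mathrm{d}(g_0, g_1)),
\]
integrate against $\nu_N$, and swap the order of integration. This is licit because the integrand has modulus $1$ and $\mu \otimes \bar\mu$ is a complex measure of finite total variation $\|\mu\|^2$. The inner integral $\int_H \langle g_0 g_1^{-1}, h \rangle \, \nu_N(\mathrm{d}h)$ converges pointwise to $\mathrm{c}_\nu(g_0 g_1^{-1})$ and is bounded by $1$ in modulus, so dominated convergence delivers (1).

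For (2), use $\bar\mu = \mu$ (a probability measure is real-valued) so that (1) gives $\int_{G \times G} \mathrm{c}_\nu(g_0 g_1^{-1}) \, \mathrm{d}(\mu \otimes \mu) = 1$. Since $|\mathrm{c}_\nu| \leq 1$ and $\mu \otimes \mu$ is a probability measure, this forces $\mathrm{c}_\nu(g_0 g_1^{-1}) = 1$ (not merely $|\mathrm{c}_\nu| = 1$) for $(\mu \otimes \mu)$-almost every pair. The atom statement is immediate, since for any two $\mu$-atoms the singleton at the corresponding pair has positive product mass. The support statement follows by a Fubini slicing argument applied to the full-measure set $\{(g_0, g_1) : g_0 g_1^{-1} \in \Gamma_\nu\}$: there exists some $g_1$ with $\mu(\Gamma_\nu g_1) = 1$, and passing to the closure gives $\mathrm{supp}(\mu) \subseteq \overline{\Gamma_\nu}\, g_1$. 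Part (3) is then just (1) applied to a discrete $\mu = \sum_i \mu\{a_i\} \delta_{a_i}$: the right-hand side of (1) becomes $\sum_{i,j} \mu\{a_i\} \overline{\mu\{a_j\}} \mathrm{c}_\nu(a_i a_j^{-1})$, and every factor $\mathrm{c}_\nu(a_i a_j^{-1})$ equals $1$ by hypothesis, so the sum collapses to $\bigl|\sum_i \mu\{a_i\}\bigr|^2 = 1$.

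For (4), the ``if'' direction follows from (1): ergodicity makes $\mathrm{c}_\nu(g_0 g_1^{-1})$ the indicator of the diagonal, and the $(\mu \otimes \bar\mu)$-mass of the diagonal is exactly $\sum_a |\mu\{a\}|^2$. For ``only if'', test with $\mu = \delta_1 + \lambda \delta_g$ for $g \neq 1$ and arbitrary $\lambda \in \mathbb{C}$; a short computation using (1) on the left reduces the hypothesis to $2\operatorname{Re}(\lambda \mathrm{c}_\nu(g)) = 0$ for every $\lambda$, forcing $\mathrm{c}_\nu(g) = 0$ (and, en route, the existence of the limit defining $\mathrm{c}_\nu(g)$, so $\nu$ is good). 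Finally, (5) is cleanest from (2): ergodicity forces $\Gamma_\nu = \{1\}$, whose closure is still $\{1\}$, so each coset of $\overline{\Gamma_\nu}$ is a singleton and $\mu$ must be a Dirac measure; the converse is the trivial observation that $|\widehat{\delta_g}(h)| \equiv 1$. The main technical obstacle is the Fubini and dominated convergence step in (1), where some care is needed to handle $\mu \otimes \bar\mu$ as a complex measure; the only other subtle point is the Fubini slicing argument in (2), which is what lets us promote an almost-everywhere statement to a single coset containing $\mathrm{supp}(\mu)$.
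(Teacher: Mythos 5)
Your proposal is correct and follows essentially the same route as the paper: part (1) is the identical Fubini-plus-dominated-convergence computation, and (2)--(5) are deduced from it in the same way, so I will only comment on the two places where you deviate locally. In (4), your family of test measures $\delta_1+\lambda\delta_g$ with $\lambda$ ranging over $\C$ is a genuine (small) improvement over the paper's contrapositive, which starts from ``choose $g\neq 1$ with $\mathrm{c}_\nu(g)\neq 0$'' and thereby tacitly assumes the limit $\mathrm{c}_\nu(g)$ exists; your version also extracts existence of that limit (i.e.\ goodness of $\nu$) from the hypothesis, which is needed since non-ergodicity could a priori mean the limit fails to exist. In (5), the paper instead argues from (4): $1=\sum_a|\mu\sing{a}|^2\leq\sum_a\mu\sing{a}\leq 1$ forces a single atom of full mass. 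Your route via (2) and $\overline{\Gamma_\nu}=\sing{1}$ also works, but the final step --- passing from ``$\supp\mu$ is a singleton'' to ``$\mu$ is a Dirac measure'' --- quietly uses inner regularity of $\mu$ (to get $\mu(G\setminus\supp\mu)=0$, which can fail for non-Radon Borel measures on non-second-countable groups); this is harmless in the standard setting of regular Borel measures on locally compact abelian groups, but the paper's argument via (4) is more elementary and avoids the issue entirely. Everything else (the Fubini slicing in (2), the collapse of the double sum in (3), the diagonal computation in the forward direction of (4)) matches the paper's proof.
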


Note that Wiener’s original lemma amounts to the claim that $\mathrm{u}$ (see the examples following Definition \ref{def:measure-sequence}) is an ergodic measure sequence. Unwinding definitions, this is just the well-known fact that $\lim_{N \to \infty} \frac 1 N \sum_{n=1}^N \lambda^{-n}$ (where $\lambda \in \Torus$) is $1$ if and only if $\lambda = 1$ and $0$ otherwise.

\begin{proof}
\begin{enumerate}
\item By Fubini’s theorem and the theorem on dominated convergence,
\begin{align*}
    \left\Vert\widehat{\mu}\right\Vert_N^2
        &= \int_H \int_G \left<g_0, h\right> \mu(\d g_0) \int_G \overline{\left<g_1, h\right>} \overline{\mu}(\d g_1) \nu_N(\d h) \\
        &= \int_{G \times G} \int_H \left<h, g_0g_1^{-1}\right> \nu_N(\d h) (\mu \otimes \overline{\mu})(\d(g_0, g_1))
\end{align*}
which converges to
\[
    \int_{G \times G} \mathrm{c}_\nu(g_0g_1^{-1})(\mu \otimes \overline{\mu})(\d(g_0,g_1))
\]
as $N \to \infty$.

\item If $\lim_{N \to \infty} \Vert\widehat{\mu}\Vert_N^2 = 1$ and $\mu$ is a probability measure, we conclude from (1) and $|\mathrm{c}_\nu(g_0g_1^{-1})| \leq 1$ that
\[
    1 = \lim_{N \to \infty} \Vert\widehat{\mu}\Vert_N^2 
    \leq \int_{G \times G} |\mathrm{c}_\nu(g_0g_1^{-1})| (\mu \otimes \overline{\mu})(\d(g_0,g_1)) 
    \leq 1,
\]
hence equality everywhere. Thus,
\[
    \int_G |\mathrm{c}_\nu(g_0g_1^{-1})| \mu(\d g_0) = 1
\]
for $\mu$-almost every $g_1 \in G$. Pick any such $g_1$.

We will now consider the function $f : g_0 \mapsto |\mathrm{c}_\nu(g_0g_1^{-1})|$ and show that $\supp \mu \subseteq \overline{\setb{g_0 \in G }{ f(g_0) = 1 }} = \overline{g_1 \Gamma_\nu}$. Assume that is not the case and choose $g \in \supp \mu$ and a neighborhood $U$ of $g$ such that $f(g_0) \neq 1$ (hence $f(g_0) < 1$) for every $g_0 \in U$. Because $g \in \supp \mu$, we have $\mu(U) > 0$. Then $\int_U f(g_0) \mu(\d g_0) < \mu(U)$ and hence \[
    \int_G f(g_0) \mu(\d g_0) < \mu(U) + \mu(G \setminus U) = 1.
\]
This is a contradiction, hence $\supp \mu$ is contained in the coset $\overline{g_1 \Gamma_\nu}$.

For the last part of (2), apply real parts to the formula from (1) instead. We get
\[
    \int_{G \times G} \Re \mathrm{c}_\nu(g_0g_1^{-1}) (\mu \otimes \overline{\mu})(\d(g_0,g_1)) = 1,
\]
hence
\[
    \int_{G} \Re \mathrm{c}_\nu(g_0g_1^{-1}) \mu(\d g_0) = 1
\]
for $\mu$-almost every $g_1$; in particular this holds for every $\mu$-atom. For every such atom we conclude $\Re \mathrm{c}_\nu(g_0 g_1^{-1}) = 1$ for almost all $g_0$, hence for all atoms $g_0$. As $|\mathrm{c}_\nu(g_0 g_1^{-1})| \leq 1$, we get $\mathrm{c}_\nu(g_0 g_1^{-1}) = 1$.

\item As $\mu$ is discrete, the integral in (1) is simply the sum over the atoms
\[
    \sum_{g_0, g_1} \mathrm{c}_\nu(g_0 g_1^{-1}) \mu\sing{g_0} \overline{\mu}\sing{g_1} = \sum_{g_0, g_1} \mu\sing{g_0} \mu\sing{g_1} = 1.
\]

\item If $\nu$ is ergodic, i.e. $\mathrm{c}_\nu = \Ind_{\sing{1}}$, we apply Fubini’s theorem and obtain
\begin{align*}
    \int_{G \times G} \mathrm{c}_\nu(g_0g_1^{-1})(\mu \otimes \overline{\mu})(\d(g_0,g_1))
    &= \int_G \int_G \Ind_{\sing{g_1}}(g_0) \mu(\d g_0) \overline{\mu}(\d g_1) \\
    &= \int_G \mu\sing{g_1} \overline{\mu}(\d g_1) \\
    &= \sum_a |\mu\sing{a}|^2.
\end{align*}

Conversely, if $\nu$ is not ergodic, choose $g \in G \setminus \sing{1}$ such that $\mathrm{c}_\nu(g) \neq 0$. If $\Re \mathrm{c}_\nu(g) \neq 0$ consider the probability measure $\mu = \frac{1}{2} (\delta_1 + \delta_g)$. Then
\begin{align*}
    & \int_{G \times G} \mathrm{c}_\nu(g_0g_1^{-1})(\mu \otimes \overline{\mu})(\d(g_0,g_1)) \\
    &\qquad    = \frac{1}{2} + \frac{1}{4}(\mathrm{c}_\nu(g) + \mathrm{c}_\nu(g^{-1})) \neq \frac{1}{2} = \sum_a |\mu\sing{a}|^2.
\end{align*}
If $\Im \mathrm{c}_\nu(g) \neq 0$ instead, procede similarly with $\mu = \frac{1}{2} (\delta_1 + \mathrm{i} \delta_g)$.

\item Finally, for a Dirac measure $\delta_a$, we have $\widehat{\delta_a}(h) = \langle a, h \rangle$. In particular, $|\widehat{\delta_a}(h)|^2 = 1$ for every $h \in H$, hence $\Vert \widehat{\delta_a} \Vert_N = 1$.

In the opposite direction, we apply (4) and get
\[
    1 = \lim_{N \to \infty} \Vert\widehat{\mu}\Vert_N^2 = \sum_a |\mu\sing{a}|^2 \leq \sum \mu\sing{a} \leq 1,
\]
hence equality. Because $0 \leq \mu\sing{a} \leq 1$ we get $(\mu\sing{a})^2 = \mu\sing{a}$ for every $a$, hence $\mu\sing{a} = 1$ for exactly one $a$ and $\mu$ is a Dirac measure. \qedhere
\end{enumerate}
\end{proof}

We mention briefly how to recover the result of Cuny, Eisner and Farkas from this: For a given good (or ergodic) sequence $(k_N)$ in $\Nat$, the Cesàro transform of the measure sequence $(\delta_{k_N})$ is a good (resp., ergodic) measure sequence $\nu$ for the pair of dual groups $(\Torus, \Int)$. The limit functions $c$ and $\mathrm{c}_\nu$ agree. Then parts (a), (b) and (c) of Proposition 2.6 in \cite{cuny_eisner_farkas_2019} correspond to parts (1), (4) and (5) above. Their Corollary 2.9 corresponds to parts (2) and (3).

\section{Extremality}

\begin{dfn}
Let $(G, H)$ be a pair of dual groups and $\nu$ a measure sequence on $H$.

\begin{enumerate}[label=(\alph*)]
\item We call a probability measure $\mu$ on $G$ \emph{extremal} for $\nu$ if $\lim_{N\to\infty} \Vert\widehat{\mu}\Vert_N^2 = 1$.

\item If every probability measure that is extremal for $\nu$ is a Dirac measure, the measure sequence $\nu$ is called \emph{extremal}.

\item If every discrete probability measure that is extremal for $\nu$ is a Dirac measure, the measure sequence $\nu$ is called \emph{extremal for discrete measures}.
\end{enumerate}
\end{dfn}

\begin{remark}
These terms are again inspired by Cuny, Eisner and Farkas \cite{cuny_eisner_farkas_2019}. Note however, that there is a minor but unfortunate mismatch between our terms: While a sequence $(k_n)$ is good in \cite{cuny_eisner_farkas_2019} iff the measure sequence $(\frac 1 N \sum_{n = 1}^N \delta_{k_n})$ is good, the sequence is \cite{cuny_eisner_farkas_2019}-extremal iff the measure sequence $(\delta_{k_N})$ is.
\end{remark}

\begin{thm}
Let $\nu$ be a measure sequence on $H$. For the assertions
\begin{enumerate}[label=(\roman*)]
\item $\nu$ is extremal,
\item $\nu$ is extremal for discrete measures,
\item $\mathrm{c}_\nu(g) = 1$ implies $g = 1$,
\end{enumerate}
we have the implications (i) $\implies$ (ii) $\iff$ (iii). Furthermore, (ii) $\implies$ (i) if $\nu$ is good and the subgroup $\overline{\Gamma_\nu}$ of $G$ is discrete and countable.
\end{thm}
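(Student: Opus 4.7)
The plan is to reduce all three implications to direct applications of Theorem \ref{thm:my-wiener}, with no substantial new work beyond choosing the right witnesses and bookkeeping. The trivial inclusion first: every Dirac measure is discrete, so (i) immediately yields (ii).

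For the equivalence (ii) $\iff$ (iii), I would prove (iii) $\implies$ (ii) by taking a discrete probability measure $\mu$ that is extremal for $\nu$ and invoking part (2) of Theorem \ref{thm:my-wiener} to conclude $\mathrm{c}_\nu(g_0 g_1^{-1}) = 1$ for every pair of $\mu$-atoms $g_0,g_1$. Hypothesis (iii) then forces $g_0 = g_1$, so $\mu$ has a single atom and is Dirac. For (ii) $\implies$ (iii), I would argue contrapositively: if $g \ne 1$ with $\mathrm{c}_\nu(g) = 1$, then also $\mathrm{c}_\nu(g^{-1}) = \overline{\mathrm{c}_\nu(g)} = 1$, so the discrete probability measure $\mu = \tfrac{1}{2}(\delta_1 + \delta_g)$ satisfies $\mathrm{c}_\nu(g_0 g_1^{-1}) = 1$ on every pair of atoms. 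Part (3) of Theorem \ref{thm:my-wiener} then produces $\lim_{N\to\infty} \Vert \widehat{\mu}\Vert_N^2 = 1$, so $\mu$ is extremal yet plainly not Dirac, contradicting (ii).

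For the conditional (ii) $\implies$ (i) under the extra hypotheses (goodness of $\nu$ and discreteness plus countability of $\overline{\Gamma_\nu}$), I would take an arbitrary probability measure $\mu$ extremal for $\nu$ and apply part (2) of Theorem \ref{thm:my-wiener} to get a coset $g_1 \overline{\Gamma_\nu}$ containing $\supp \mu$. Because translation is a homeomorphism, this coset is closed, and by hypothesis it is a countable set. Since $\supp \mu$ is closed and carries the full mass of $\mu$, countable additivity gives
\[
    1 = \mu(g_1 \overline{\Gamma_\nu}) = \sum_{g \in g_1 \overline{\Gamma_\nu}} \mu\{g\},
\]
so $\mu$ is discrete. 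Applying (ii) then finishes the proof.

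There is no real obstacle here; the only mildly delicate point is the last step, which requires noting that a probability measure whose support lies in a countable closed set is necessarily purely atomic so that (ii) becomes applicable. Everything else is a direct invocation of the Wiener-type theorem already established.
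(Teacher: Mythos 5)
Your overall route coincides with the paper's: (i)$\implies$(ii) is trivial, (ii)$\implies$(iii) goes through the witness $\mu=\tfrac12(\delta_1+\delta_g)$ exactly as in the paper (the paper computes $\Vert\widehat{\mu}\Vert_N^2$ by hand rather than citing part (3) of Theorem \ref{thm:my-wiener}, but that is the same argument), and (ii)$\implies$(i) uses Theorem \ref{thm:my-wiener}(2) to place $\supp\mu$ in a countable closed coset of $\overline{\Gamma_\nu}$ and deduce that $\mu$ is purely atomic. Those three steps are fine; your explicit remark that a probability measure carried by a countable closed set is necessarily discrete is a correct and welcome elaboration of the paper's one-line version of that step.

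The one genuine issue is (iii)$\implies$(ii). You invoke Theorem \ref{thm:my-wiener}(2), but parts (1) and (2) of that theorem implicitly require the limits $\mathrm{c}_\nu(g_0g_1^{-1})$ to exist (the proof of part (1) is a dominated-convergence argument whose pointwise limits are exactly these numbers), i.e.\ they require $\nu$ to be good at least on the relevant set. In the extremality theorem, goodness is deliberately assumed only for the final ``Furthermore'' implication; the equivalence (ii)$\iff$(iii) is claimed for an arbitrary measure sequence, and statement (iii) only quantifies over those $g$ for which $\mathrm{c}_\nu(g)$ happens to exist. The paper therefore proves (iii)$\implies$(ii) without citing part (2): writing $\mu=\sum_iA_i\delta_{g_i}$, it expands $\Vert\widehat{\mu}\Vert_N^2=\sum_iA_i^2+2\sum_{i<j}A_iA_j\Re\int_H\langle g_ig_j^{-1},h\rangle\,\nu_N(\mathrm{d}h)$, notes that each cross term is bounded above by $1$ while the total tends to $\bigl(\sum_iA_i\bigr)^2=1$, and concludes that each $\Re\int_H\langle g_ig_j^{-1},h\rangle\,\nu_N(\mathrm{d}h)$ must converge to $1$, whence (using $\bigl|\int_H\langle g_ig_j^{-1},h\rangle\,\nu_N(\mathrm{d}h)\bigr|\le1$) the limit $\mathrm{c}_\nu(g_ig_j^{-1})$ exists and equals $1$. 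There the existence of these limits is a \emph{conclusion} extracted from extremality, not a hypothesis. Your argument is correct if you add the standing assumption that $\nu$ is good; to obtain the statement as claimed you need this squeezing computation (or an equivalent) in place of the citation of part (2).
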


\begin{proof}
(i $\implies$ ii) is trivial.

(ii $\implies$ iii) Suppose not. Choose $g \in G$ such that $g \neq 1$ but $\mathrm{c}_\nu(g) = 1$ and consider the (discrete) measure $\mu = \frac 1 2 (\delta_1 + \delta_g)$. We have
\[
    |\widehat{\mu}(h)|^2 = \frac 1 4 (2 + \langle g, h \rangle + \langle g^{-1}, h \rangle).
\]
Integrating and passing to the limit gives
\[
    \lim_{N\to\infty} \Vert\widehat{\mu}\Vert_N^2 = \frac 1 4 (2 + \mathrm{c}_\nu(g) + \mathrm{c}_\nu(g^{-1})) = 1
\]
even though $\mu$ is not Dirac, hence the sequence is not extremal for discrete measures.

(iii $\implies$ ii) Let $\mu$ be a discrete probability measure, i.e. $\mu = \sum_{i \in I} A_i \delta_{g_i}$ for some index set $I$ where $A_i > 0$, $\sum_{i} A_i = 1$ and $g_i$ are pairwise distinct elements of $G$, and assume that $\mu$ is extremal for $\nu$. We have
\[
    \widehat{\mu}(h) = \sum_{i \in I} A_i \langle g_i, h \rangle
\]
and hence
\[
    \Vert\mu\Vert_N^2 = \sum_{i} A_i^2 + 2 \sum_{i < j} A_i A_j \Re \int_H \langle g_ig_j^{-1},h \rangle \nu_N(\d h).
\]
By extremality, these terms converge to $1$ as $N \to \infty$. By an elementary computation, each $\Re \int_H \langle g_i g_j^{-1}, h \rangle \nu_N(\d h)$ converges to $1$. Because $\int_H \langle g_i g_j^{-1}, h \rangle \nu_N(\d h)$ is bounded in absolute value by $1$, these terms also converge to $1$, but their limit is just $\mathrm{c}_\nu(g_i g_j^{-1})$. By (iii), $g_i g_j^{-1} = 1$, so $I$ is a singleton set and $\mu$ is Dirac.

(ii $\implies$ i) Let $\mu$ be extremal for $\nu$. By Theorem \ref{thm:my-wiener}(2), the support of $\mu$ is contained in a coset of $\overline{\Gamma_\nu}$ since $\nu$ is good. Because we assume $\overline{\Gamma_\nu}$ to be discrete and countable, the same is true of this coset, hence $\nu$ is a discrete measure. By (ii), $\mu$ is Dirac.
\end{proof}

\section{Application to operator theory}

As an application, we will use our version of Wiener’s lemma to obtain a version of a theorem of Goldstein \cite{goldstein_1996}.

\subsection{Unitary group actions}

As a first step, we will consider the case were the group $H$ acts on a Hilbert space $X$ by contractive operators. Such actions are automatically unitary, so we can use the spectral theory of group actions to obtains measures that we can apply our version of Wiener’s lemma to.

\begin{thm}\label{thm:unitary-group-actions}
Let $X$ be a Hilbert space and $(G, H)$ a pair of dual groups. Let $H$ act on $X$ strongly continuously by unitary operators $T_h$. For $a \in G$ denote by $P_a$ the orthogonal projection onto the subspace $\bigcap_{h \in H} \operatorname{ker} (\langle a, h \rangle \operatorname{I} - T_h)$. Let $\nu$ be a good measure sequence on $H$. Then
\[
    \lim_{N \to \infty} \int_H \left|(T_h x|y)\right|^2 \nu_N(\mathrm{d}h) = \sum_{a \in G} \left|(P_a x | y) \right|^2.
\]
for all $x, y \in X$.
\end{thm}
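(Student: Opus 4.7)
The plan is to reduce the claim to the generalized Wiener lemma (Theorem \ref{thm:my-wiener}), with the bridge being the spectral theorem for strongly continuous unitary representations of the locally compact abelian group $H$ (the SNAG theorem). SNAG produces a projection-valued Borel measure $E$ on $\widehat{H}$; transporting it to $G$ via the fixed isomorphism gives the spectral resolution
\[
    T_h \;=\; \int_G \langle g, h \rangle\, E(\mathrm{d}g), \qquad h \in H.
\]

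Given this tool, I fix $x, y \in X$ and introduce the complex Borel measure $\mu_{x,y}$ on $G$ defined by $\mu_{x,y}(A) := (E(A) x | y)$; its total variation is at most $\Vert x \Vert \Vert y \Vert$, and the resolution above yields $(T_h x | y) = \widehat{\mu_{x,y}}(h)$. Hence
\[
    \int_H |(T_h x | y)|^2 \, \nu_N(\mathrm{d}h) \;=\; \Vert \widehat{\mu_{x,y}} \Vert_N^2.
\]
Applying Theorem \ref{thm:my-wiener}(4) under the ergodicity of $\nu$ (which, by that part, is exactly what characterizes the sum-over-atoms limit for every complex Borel measure on $G$), this converges to $\sum_a |\mu_{x,y}\sing{a}|^2$.

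It remains to match atoms of $\mu_{x,y}$ with the projections $P_a$, i.e., to show $E(\sing{a}) = P_a$. The inclusion $\operatorname{range} E(\sing{a}) \subseteq \bigcap_h \ker(\langle a, h \rangle I - T_h)$ is immediate from the spectral calculus, since $T_h E(\sing{a}) = \langle a, h \rangle E(\sing{a})$. For the reverse, let $v$ lie in the joint eigenspace and consider the positive measure $A \mapsto \Vert E(A) v \Vert^2 = (E(A) v | v)$: its Fourier transform equals $\langle a, \cdot\rangle \Vert v \Vert^2 = \widehat{\Vert v \Vert^2 \delta_a}$, so uniqueness of Fourier transforms of finite measures gives $E(A) v = 0$ whenever $a \notin A$, whence $v = E(\sing{a}) v$.

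The main obstacle is the coupling of SNAG with the identification $E(\sing{a}) = P_a$; both are classical but must be handled with some care over a general locally compact abelian group $H$. Once they are in place, everything else is direct substitution into Theorem \ref{thm:my-wiener}.
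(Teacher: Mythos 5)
Your proposal is correct and follows essentially the same route as the paper: the spectral (SNAG) theorem gives the projection-valued measure, the identity $(T_h x|y)=\widehat{\mu_{x,y}}(h)$ feeds into Theorem \ref{thm:my-wiener}, and the identification $E(\sing{a})=P_a$ is settled by the same Fourier-uniqueness argument (your version, exploiting $\Vert E(A)v\Vert^2=(E(A)v|v)$ for the reverse inclusion, is if anything slightly cleaner than the paper's double appeal to the equality case of Cauchy--Schwarz). Note that both you and the paper actually invoke part (4) of Theorem \ref{thm:my-wiener}, which requires $\nu$ to be \emph{ergodic} rather than merely good as the statement reads; this is a defect of the statement shared by both proofs, not something you introduced.
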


\begin{proof}
By the spectral theorem applied to the action of $H$ (cf. \cite[Theorem 4.45]{folland_2016}), there exists a unique projection-valued measure $P$ on its dual group $G$ such that the operator $T_h$ associated to any $h \in H$ can be written as
\[
    T_h = \int_G \langle g, h \rangle P(\d g).
\]
By the definition of integrals against projection-valued measures this means that for all $x, y \in X$
\[
    (T_h x|y) = \int_G \langle g, h \rangle \mu_{x, y}(\d g) = \widehat{\mu_{x,y}}(h),
\]
where $\mu_{x,y}$ is the ordinary complex measure given by $\mu_{x,y}(E) = ( P(E)x | y )$. By Theorem \ref{thm:my-wiener} (Wiener’s lemma), we find
\[
    \lim_{N \to \infty} \int_H |(T_h x|y)|^2 \nu_N(\d h) = \lim_{N \to \infty} \Vert\widehat{\mu_{x,y}}\Vert_N^2 = \sum_a |\mu_{x,y}\sing{a}|^2 = \sum_a |(P\sing{a}x|y)|^2,
\]
where the sums are over the atoms of $\mu_{x,y}$.

It remains to show that $P_a = P\sing{a}$. We follow the arguments in \cite[Proposition 5.14]{eisner_farkas_2020}. As both operators are orthogonal projections, we only have to check that the set of fixed points of $P\sing{a}$ is $\bigcap_{h\in H} \operatorname{ker}(\langle a, h \rangle \operatorname{I} - T_h)$. Now $x = P\sing{a} x$ is equivalent to $(P\sing{x}|x) = \Vert x \Vert^2$ by the equality case in the Cauchy--Schwarz inequality. In other words, $\mu_{x,x}\sing{a} = \Vert x \Vert^2$. As $\Vert \mu_{x,x} \Vert = \mu_{x,x}(G) = \Vert x \Vert^2$ (the measure $\mu_{x,x}$ is positive), we get the equivalent equality of measures $\mu_{x,x} = \delta_a \Vert x \Vert^2$. By injectivity of the Fourier transform, we get $\widehat{\mu_{x,x}}(h) = \langle a, h \rangle \Vert x \Vert^2$ (as $\widehat{\delta_a}(h) = \langle a, h \rangle$) for every $h \in H$. Finally, using $(T_h x|x) = \widehat{\mu_{x,x}}(h)$ and equality in the Cauchy--Schwarz inequality again, we arrive at $T_h x = \langle a, h \rangle x$, i.e. $x \in \bigcap_{h\in H} \operatorname{ker}(\langle a, h \rangle \operatorname{I} - T_h)$.
\end{proof}

\subsection{Weak stability}

Considering only contractive group representations excludes many examples because such actions are automatically unitary. To extend our result to actions of some semigroups on Hilbert spaces, we use a variant of a result of Sz.-Nagy and Foiaş that decomposes the Hilbert space on which a given operator acts into an orthogonal sum such that the restrictions of the operator to the subspaces are unitary and weakly stable, respectively. As we want to apply Proposition \ref{prop:koopman-von-neumann}, we need weak stability with respect to filter convergence.

\begin{dfn}
Let $\mathcal{F}$ be a filter on some set $M$ and $(x_m)_{m \in M}$ a family of elements in some Hilbert space $X$. We say that $(x_m)$ \emph{converges weakly} to $x \in X$ with respect to $\mathcal{F}$ if
\[
    \flim{\mathcal{F}}_m (x_m| y) = ( x | y )
\]
for every $y \in X$.
\end{dfn}

\begin{lemma}\label{lem:weakly-convergent-refinement}
Let $\mathcal{F}$ be a filter on $M$ and $(x_m)$ be a bounded family of elements. Then $(x_m)$ converges weakly with respect to some refinement of $\mathcal{F}$.
\end{lemma}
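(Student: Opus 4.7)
The plan is to refine $\cal{F}$ to an ultrafilter $\cal{U}$ and then combine compactness of closed bounded disks in $\C$ with the Riesz representation theorem to produce the weak limit. Set $C := \sup_{m \in M} \Vert x_m \Vert$, which is finite by assumption. By Zorn's lemma, pick a maximal filter $\cal{U}$ on $M$ containing $\cal{F}$; such a $\cal{U}$ is an ultrafilter and a refinement of $\cal{F}$ by construction.

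For every fixed $y \in X$, Cauchy--Schwarz gives $\left|(x_m \mid y)\right| \leq C \Vert y \Vert$, so the net $\bigl((x_m \mid y)\bigr)_{m \in M}$ takes values in the compact closed disk of radius $C \Vert y \Vert$ in $\C$. Since ultrafilters converge in every compact Hausdorff space, the scalar
\[
    \varphi(y) := \flim{\cal{U}}_m (x_m \mid y)
\]
is well-defined for each $y \in X$. Because ultrafilter limits commute with continuous operations, applying this to addition and scalar multiplication on $\C$ shows that $\varphi$ is conjugate-linear in $y$, and the bound $\left|\varphi(y)\right| \leq C \Vert y \Vert$ shows that it is continuous.

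The Riesz representation theorem now supplies an element $x \in X$ with $\varphi(y) = (x \mid y)$ for every $y \in X$. This yields $\flim{\cal{U}}_m (x_m \mid y) = (x \mid y)$ for all $y$, which is exactly weak convergence of $(x_m)$ to $x$ with respect to the refinement $\cal{U}$. There is no real obstacle: the only non-elementary ingredients are the existence of an ultrafilter refinement (Zorn's lemma) and the convergence of ultrafilters on compact sets, both entirely standard. The one step worth verifying carefully is the conjugate-linearity of $\varphi$, but this reduces to the remark that the maps $(\alpha, \beta) \mapsto \alpha + \beta$ and $(\lambda, \alpha) \mapsto \lambda \alpha$ are continuous on $\C$, so they commute with $\cal{U}$-limits.
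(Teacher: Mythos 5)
Your proof is correct and follows essentially the same route as the paper's: refine $\mathcal{F}$ to an ultrafilter, use boundedness to guarantee the ultrafilter limits $\flim{\mathcal{U}}_m (x_m \mid y)$ exist, observe that $y \mapsto \flim{\mathcal{U}}_m (x_m \mid y)$ is a bounded conjugate-linear functional, and apply Riesz representation. You simply spell out the standard details (compactness of disks, continuity of the field operations) that the paper leaves implicit.
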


\begin{proof}
Let $\mathcal{U}$ be an ultrafilter refining $\mathcal{F}$. As $(x_m)$ is bounded, so is $(x_m|y)$ for every $y \in X$ and the limit $\flim{\mathcal{U}}_m (x_m | y)$ exists for every $y \in X$. The map $y \mapsto \flim{\mathcal{U}}_m (x_m | y)$ is an conjugate linear and bounded functional on $X$. Hence, there exists $x \in X$ such that $\flim{\mathcal{U}}_m ( x_m | y ) = ( x | y )$ for all $y \in X$.
\end{proof}

\begin{dfn}
Let $M$ be a semigroup and $\mathcal{F}$ a filter $M$. Suppose that $M$ is equipped with a strongly continuous action on a Hilbert space $X$. The action is \emph{weakly stable} (for $\mathcal{F}$) if $(T_m x)_{m \in M}$ converges weakly (with respect to $\mathcal{F}$) to $0$ for every $x \in X$.
\end{dfn}

By choosing the semigroup $\setl{\mathrm{I}, T, T^2, \dots }$ and its Frechét filter, we recover the usual definition of weak stability for the operator $T$.

If $M$ is weakly stable for a filter $\mathcal{F}$, it is also weakly stable for every refinement of $\mathcal{F}$.

\begin{dfn}
We say that a filter $\mathcal{F}$ on an abelian semigroup $M$ is \emph{invariant} if for every set $A \in \mathcal{F}$ and every $n \in M$ the sets $nA = \setb{ na }{ a \in A }$ and $(A : n) = \setb{ m \in M }{ ma \in A }$ are in $\mathcal{F}$ as well.
\end{dfn}

\begin{remark}
If $M$ is a group it suffices to check that $nA \in \mathcal{F}$ for all $n \in M$, $A \in \mathcal{F}$ to show that the filter $\mathcal{F}$ is invariant on $M$ (because $(A : n) = n^{-1} A$). In this case, the filter $\mathcal{F}$ associated to a measure sequence $\nu$ is invariant if the measure sequence is \emph{asymptotically invariant}, i.e. if $n_* \nu_N - \nu_N$ converges strongly to $0$ as $N \to \infty$ for every $n \in M$; in other words, if $\nu_N(n^{-1}A) - \nu_N(A)$ converges to $0$ for every $n \in M$ and every measurable set $A$.

Let $M$ be a subsemigroup of a group $H$ and $\nu$ be an asymptotically invariant measure sequence on $H$ such that the support of each $\nu_N$ is contained in $M$. In this case, we can consider $\nu$ to be a measure sequence on $M$ as well. The filter $\mathcal{F}_{\nu}$ on $M$ associated to this restriction is still invariant.
\end{remark}

If $M$ is an abelian semigroup acting on a Hilbert space $X$, there is also an action of $M$ on $X$ where $m \in M$ acts via the adjoint $T_m^*$ of the operator $T_m$ originally associated to $m$. We call this the action of $M^*$. (If $M$ is not abelian, the action by adjoint operators is an action of the dual semigroup of $M$ instead.)

With this, we get the following version of a theorem of Sz.-Nagy and Foiaş (cf. \cite[Theorem 3.9]{eisner_2010}).

\begin{prop}[Sz.-Nagy--Foiaş decomposition for semigroups]\label{prop:sz-nagy-foias}
Let $M$ be a semigroup with a strongly continuous action by contractions on a Hilbert space $X$ and assume that $M$ is a subsemigroup of an abelian group $H$ such that for every $h \in H$ we have $h \in M$ or $h^{-1} \in M$. Then $X$ is the orthogonal sum of two subspaces $X_1$, $X_2$, both invariant under the action and the adjoint action of $M$, such that
\begin{enumerate}
\item The restriction of the action to $X_1$ is unitary.
\item For every invariant filter $\mathcal{F}$ on $M$ the restriction of the action to $X_2$ is weakly stable with respect to $\mathcal{F}$.
\end{enumerate}
\end{prop}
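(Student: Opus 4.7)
Define
\[
    X_1 := \{ x \in X : T_m T_m^* x = T_m^* T_m x = x \text{ for every } m \in M \}.
\]
This is a closed linear subspace (intersection of kernels of the positive operators $I - T_m T_m^*$ and $I - T_m^* T_m$), and by the equality case of Cauchy--Schwarz (as used in the proof of Theorem~\ref{thm:unitary-group-actions}) the defining condition is equivalent to $\|T_m x\| = \|T_m^* x\| = \|x\|$ for every $m \in M$. To check that $X_1$ is invariant under each $T_n$ with $n \in M$, it suffices to show $\|T_m^* T_n x\| = \|x\|$ for every $m \in M$ and $x \in X_1$, the remaining identities being immediate from $mn \in M$. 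Using the hypothesis $H = M \cup M^{-1}$: if $k := nm^{-1} \in M$, then $T_n = T_m T_k$ (by commutativity) and
\[
    T_m^* T_n x = T_m^* T_m T_k x = T_k x,
\]
since $\|T_m T_k x\| = \|T_{mk} x\| = \|x\| = \|T_k x\|$ makes $T_k x$ an $m$-isometric vector; if instead $l := mn^{-1} \in M$, then $T_m = T_l T_n$ and, using that the adjoints of commuting operators commute,
\[
    T_m^* T_n x = T_n^* T_l^* T_n x = T_l^* T_n^* T_n x = T_l^* x.
\]
Both cases give norm $\|x\|$; a symmetric argument yields invariance under $T_n^*$, so $X_2 := X_1^\perp$ is invariant under both actions, and the identities $T_n T_n^*|_{X_1} = T_n^* T_n|_{X_1} = I|_{X_1}$ make $T_n|_{X_1}$ unitary.

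For weak stability on $X_2$, fix an invariant filter $\mathcal{F}$ on $M$ and $x \in X_2$. By Lemma~\ref{lem:weakly-convergent-refinement} it suffices to show that the weak limit $x^* := \flim{\mathcal{U}}_m T_m x$ vanishes for every ultrafilter $\mathcal{U}$ refining $\mathcal{F}$. Since each $T_m x$ lies in the weakly closed space $X_2$, automatically $x^* \in X_2$, so it suffices to prove the main claim $x^* \in X_1$ (forcing $x^* \in X_1 \cap X_2 = \{0\}$). For any $k \in M$, the push-forward $\mathcal{U}_k := \{B \subseteq M : (B:k) \in \mathcal{U}\}$ is again an ultrafilter refining $\mathcal{F}$ (invariance of $\mathcal{F}$ gives $(A:k) \in \mathcal{F}$ for every $A \in \mathcal{F}$), and $T_k x^* = \flim{\mathcal{U}_k}_l T_l x$. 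From this shift-invariance, combined with the monotonicity $\|T_{km} x\| \leq \|T_m x\|$ coming from contractivity, the plan is to extract the isometry conditions $\|T_k x^*\| = \|T_k^* x^*\| = \|x^*\|$; the Cauchy--Schwarz argument from the $X_1$-invariance step then yields $T_k T_k^* x^* = T_k^* T_k x^* = x^*$, so $x^* \in X_1$ as required.

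The main obstacle is precisely the extraction of these two norm equalities. Since $T_m$ does not commute with $T_k^*$ in general, they cannot be obtained by the kind of direct symbolic manipulation that worked for the invariance of $X_1$: the weak-limit representation of $T_k^* x^*$ involves terms $T_k^* T_m x$ which do not simplify in either case of $H = M \cup M^{-1}$. My expectation is that closing this gap requires either a double-ultrafilter argument, simultaneously tracking the adjoint-orbit $(T_m^* x)$ along refinements of $\mathcal{F}$, or a passage through a unitary dilation of the contractive action of $M$, thereby reducing the isometry statement to a weak-limit computation on a larger Hilbert space where all $T_m$ extend to unitaries.
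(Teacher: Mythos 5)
Your construction of $X_1$ and the invariance argument match the paper's, and your reduction of weak stability to the claim that every weak ultrafilter limit $x^*$ of $(T_m x)$ lies in $X_1$ is also the paper's strategy. But the step you explicitly leave open --- establishing $T_k^* T_k x^* = T_k T_k^* x^* = x^*$ --- is the heart of the proof, and the route you sketch toward it (proving the norm equalities $\Vert T_k x^* \Vert = \Vert T_k^* x^* \Vert = \Vert x^* \Vert$ first, possibly via a dilation or a double-ultrafilter argument) is not how it closes. The missing idea is to prove the operator identities \emph{directly} as weak limits of a strongly convergent net, bypassing any computation of $T_k^* x^*$. Concretely, for fixed $n \in M$ one has
\[
    \Vert T_n^* T_n T_m x - T_m x \Vert^2 \leq \Vert T_n^* T_n T_m x\Vert^2 - 2\Vert T_n T_m x\Vert^2 + \Vert T_m x \Vert^2 \leq \Vert T_m x \Vert^2 - \Vert T_n T_m x \Vert^2 .
\]
The monotonicity $\Vert T_{nm} x \Vert \leq \Vert T_m x \Vert$ that you already noted, combined with the \emph{invariance} of $\mathcal{F}$ (both $nM \in \mathcal{F}$ and $(A:n) \in \mathcal{F}$ are used), forces $\Vert T_m x\Vert^2$ and $\Vert T_n T_m x\Vert^2$ to converge along $\mathcal{F}$ to the same value, namely $\inf_m \Vert T_m x\Vert^2$. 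Hence $T_n^* T_n T_m x - T_m x \to 0$ in norm along $\mathcal{F}$, and passing to the weak limit along your ultrafilter gives $T_n^* T_n x^* = x^*$ (and symmetrically $T_n T_n^* x^* = x^*$), so $x^* \in X_1 \cap X_2 = \{0\}$. No dilation and no second ultrafilter are needed; the non-commutation of $T_m$ with $T_k^*$ that blocks your symbolic approach simply never enters. As it stands, your argument is incomplete at precisely the point where the theorem's content lies.
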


\begin{proof}

Let
\[
    X_1 := \setb{ x \in X }{ \text{$\Vert T_m x \Vert = \Vert T_m^* x \Vert = \Vert x \Vert$ for all $m \in M$} }.
\]
An easy computation with the Cauchy-Schwarz inequality (cf. \cite[Theorem 3.9]{eisner_2010}) shows that
\[
    X_1 = \setb{ x \in X }{ \text{$T_m^* T_m x = T_m T_m^* x = x$ for all $m \in M$} }.
\]
To see that $X_1$ is $M$-invariant, let $x \in X_1$ and $n \in M$. For every $m \in M$, we have $\Vert T_m T_n x \Vert = \Vert x \Vert = \Vert T_n x \Vert$. We can also write $m = nk$ where $k \in H$. If $k \in M$, then $\Vert T_m^* T_n x \Vert = \Vert T_k^* T_n^* T_n x \Vert = \Vert T_k^* x \Vert = \Vert x \Vert = \Vert T_n x \Vert$. Otherwise we have $k^{-1} \in M$ and we conclude $n = m k^{-1}$, hence
\[
    \Vert x \Vert = \Vert T_{k^{-1}}^* T_m^* T_n x \Vert \leq \Vert T_m^* T_n x \Vert \leq \Vert x \Vert
\]
because the action is contractive. Therefore, $\Vert T_m^* T_n x \Vert = \Vert x \Vert = \Vert T_n x \Vert$. Essentially the same argument shows that $X_1$ is also $M^*$-invariant.

We have to take $X_2 := X_1^\bot$. Let $x \in X_2$ and we have to show that $(T_m x)_{m \in M}$ converges weakly to $0$ with respect to $\mathcal{F}$. We will use the fact that a filter on a compact set converges to an element $x_0$ if and only if every \emph{convergent} refinement of it converges to $x_0$. So suppose that $(T_m x)$ does not converge weakly to $0$; hence there exists $y \in X_2$ such that $(T_m x|y)$ does not converge to $0$ and by the just-stated fact we can choose a refinement $\mathcal{G}$ of $\mathcal{F}$ such that $(T_m x|y)$ converges to something other than $0$ (we get compactness from the boundedness of $(T_m x)$). By Lemma \ref{lem:weakly-convergent-refinement} there exists a refinement $\mathcal{H}$ of $\mathcal{G}$ such that $(T_m x)$ converges weakly to some $x_0$. By the choice of $\mathcal{G}$ and $y$ we have $(x_0|y) \neq 0$, hence $x_0 \neq 0$.

For each $n \in M$, we have
\begin{align*}
    \Vert T_n^* T_n T_m x - T_m x \Vert^2
        &= \Vert T_n^* T_n T_m x \Vert^2 - 2 (T_n^* T_n T_m x | T_m x ) + \Vert T_m x \Vert^2 \\
        &\leq \Vert T_n T_m x \Vert^2 - 2 \Vert T_n T_m x \Vert^2 + \Vert T_m x \Vert^2 
        = \Vert T_m x \Vert^2 - \Vert T_n T_m x \Vert ^2
\end{align*}

The term $\Vert T_m x \Vert^2$ converges to $\inf \setb{ \Vert T_m x \Vert^2 }{ m \in M } =: L$. Indeed, for every $\varepsilon > 0$ there exists $n \in M$ such that $0 \leq \Vert T_n x \Vert^2 - L < \varepsilon$ by the definition of the infimum. Because $M$ acts by contractions, we have $0 \leq \Vert T_m T_n x \Vert^2 - L < \varepsilon$ for every $m \in M$ and the set $nM = \setb{ m n }{ m \in M }$ ($M$ is commutative) belongs to $\mathcal{F}$ because $M$ does.

Because $\mathcal{F}$ is invariant under $A \mapsto (A : n)$ as well, $\Vert T_n T_m x \Vert^2$ converges to the same value. Thus the right-hand side converges to $0$ with respect to $\mathcal{F}$. Therefore, so does the left-hand side.

We have $T_n^* T_n T_m x \to T_n^* T_n x_0$ weakly with respect to $\mathcal{H}$ by continuity of $T_n^* T_n$ and $T_n^* T_n T_m x \to x_0$ with respect to $\mathcal{F}$ (and hence $\mathcal{H}$) by the considerations above; hence $T_n^* T_n x_0 = x_0$ and similarly $T_n T_n^* x_0 = x_0$ for all $n \in M$. Therefore $x_0 \in X_1$. Because $X_2$ is closed and $M$-invariant, we have $x_0 \in X_2$ as well, whence $x_0 = 0$, a contradiction.
\end{proof}

\subsection{Contractive semigroup actions}

We are now ready to combine the results of the previous two sections to obtain a generalization of a theorem of Goldstein \cite[p. 5, Main Theorem]{goldstein_1996}. To retrieve the discrete version of his result, pick $H = \Int$, $M = \Nat$ and the measure sequence $(\mathrm{u}_N)$ on $\Nat$ in the following theorem. For the continuous version, pick $H = \Real$, $M = \Real_{\geq 0}$ and the measure sequence $(\mathrm{u}_N)$ on $\Real$.

\begin{thm}\label{prop:semigroup-goldstein}
Let $(G, H)$ be a pair of dual groups and $M$ a subsemigroup of $H$ such that for any $h \in H$ we have $h \in M$ or $h^{-1} \in M$. Let $M$ act on a Hilbert space $X$ by contractions and such that the action is continuous with respect to the strong operator topology on $\mathcal{L}(X)$. Let $\nu$ be an ergodic and asymptotically invariant measure sequence on $H$ such that the support of $\nu_N$ is contained in $M$ for every $N$. For every $a \in G$ denote by $\mathrm{P}_a$ the orthogonal projection onto $\bigcap_{m \in M} \ker(\langle a, m \rangle \operatorname{I} - T_m)$. Then
\[
    \lim_{N \to \infty} \int_M |(T_m x | y)|^2 \nu_N(\d m) = \sum_{a \in G} |(\mathrm{P}_a x|y)|^2
\]
for all $x, y \in X$.
\end{thm}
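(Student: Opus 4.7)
The plan is to decompose the Hilbert space using the Sz.-Nagy--Foiaş theorem (Proposition \ref{prop:sz-nagy-foias}) and then treat the unitary summand via Theorem \ref{thm:unitary-group-actions} and the weakly stable summand by a direct vanishing argument through the Koopman--von Neumann lemma. Since $\supp \nu_N \subseteq M$ and $\nu$ is asymptotically invariant on $H$, the remark preceding Proposition \ref{prop:sz-nagy-foias} shows that the filter $\mathcal{F}_\nu$, viewed as a filter on $M$, is invariant. Proposition \ref{prop:sz-nagy-foias} then yields an orthogonal decomposition $X = X_1 \oplus X_2$, both summands invariant under $M$ and $M^*$, such that the $M$-action on $X_1$ is unitary and the $M$-action on $X_2$ is weakly stable with respect to $\mathcal{F}_\nu$.

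On $X_1$ the operators $T_m$ satisfy $T_m^*T_m = T_m T_m^* = I$ and hence are unitary. Using the hypothesis that every $h \in H$ belongs to $M$ or $M^{-1}$, I extend this to a strongly continuous unitary representation of $H$ on $X_1$ by setting $T_h := T_{h^{-1}}^*$ whenever $h^{-1} \in M$; well-definedness on $M \cap M^{-1}$ and the homomorphism property follow routinely since the $T_m|_{X_1}$ are unitary. Theorem \ref{thm:unitary-group-actions}, applied to this extended action with the ergodic (hence good) measure sequence $\nu$, gives for $x_1, y_1 \in X_1$
\[
\lim_{N \to \infty} \int_H |(T_h x_1 | y_1)|^2 \nu_N(\d h) = \sum_{a \in G} |(\widetilde{P}_a x_1 | y_1)|^2,
\]
where $\widetilde{P}_a$ is the spectral projection on $X_1$ for the extended $H$-action. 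Since each $\nu_N$ is supported in $M$, the left integral equals $\int_M$. A short verification matches $\widetilde{P}_a$ with the original $P_a$: if $T_m x = \langle a, m\rangle x$ for every $m \in M$, then $\|T_m x\| = \|x\|$, which forces $T_m^* T_m x = x$ by the Cauchy--Schwarz equality case, and applying $T_m$ to $T_m^* x = \overline{\langle a, m\rangle} x$ yields $T_m T_m^* x = x$, so $x \in X_1$; on $X_1$ the $M$-eigenspace for the character $m \mapsto \langle a, m\rangle$ agrees with the $H$-eigenspace for the extended action by construction.

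To pass from $X_1$ to all of $X$, decompose $x = x_1 + x_2$ and $y = y_1 + y_2$ with $x_i, y_i \in X_i$. The joint $M$- and $M^*$-invariance of $X_1$ and $X_2$ kills the cross terms: $(T_m x_1|y_2) = 0 = (T_m x_2|y_1)$. Therefore
\[
|(T_m x|y)|^2 = |(T_m x_1|y_1)|^2 + |(T_m x_2|y_2)|^2 + 2\Re\bigl[(T_m x_1|y_1)\overline{(T_m x_2|y_2)}\bigr].
\]
The weak stability of the $X_2$-action yields $\flim{\mathcal{F}_\nu}_m (T_m x_2|y_2) = 0$, so $m \mapsto |(T_m x_2|y_2)|^2$ converges to $0$ along $\mathcal{F}_\nu$ and is bounded by $\|x_2\|^2 \|y_2\|^2$; Proposition \ref{prop:koopman-von-neumann} thus gives $\int_M |(T_m x_2|y_2)|^2 \nu_N(\d m) \to 0$. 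The Cauchy--Schwarz inequality in $L^2(\nu_N)$, together with the uniform bound $|(T_m x_1|y_1)| \leq \|x_1\|\|y_1\|$, then dispatches the mixed term. Combining everything with $P_a x = P_a x_1$ (since $\operatorname{range}(P_a) \subseteq X_1$) and $(P_a x_1|y_1) = (P_a x|y)$ proves the theorem.

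The main subtlety lies in the $X_1$-step: one must simultaneously extend the semigroup $M$-action to a strongly continuous unitary $H$-representation on $X_1$ and verify that the spectral projections $P_a$ defined on $X$ via $M$-eigenvectors coincide with the projections produced by Theorem \ref{thm:unitary-group-actions} for the extended action; none of the individual pieces is delicate, but the interplay between the semigroup living on $X$ and the group living on $X_1$ requires care.
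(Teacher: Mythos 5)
Your proof is correct and follows essentially the same route as the paper: the Sz.-Nagy--Foia\c{s} decomposition of Proposition \ref{prop:sz-nagy-foias}, extension of the unitary part on $X_1$ to an $H$-action handled by Theorem \ref{thm:unitary-group-actions}, and the weakly stable part on $X_2$ (plus the mixed term) killed via Proposition \ref{prop:koopman-von-neumann}. The only, harmless, deviation is in showing $\operatorname{range}(\mathrm{P}_a) \subseteq X_1$: you argue directly from the Cauchy--Schwarz equality case that any $M$-eigenvector for a unimodular character lies in $X_1$, whereas the paper shows the eigenspace meets $X_2$ trivially via a weak-stability/ultrafilter argument; both work.
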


\begin{proof}
By Proposition \ref{prop:sz-nagy-foias}, we can write $X$ as an orthogonal direct sum $X_1 \oplus X_2$ of $M$-invariant subspaces where the action of $M$ on $X_1$ is unitary and the action of $M$ on $X_2$ is weakly stable.

Write $x, y \in X$ as $x = x_1 + x_2$ and $y = y_1 + y_2$ where $x_1, y_1 \in X_1$, $x_2, y_2 \in X_2$. We have
\[
    |(T_m x|y)|^2 = |(T_m x_1|y_1)|^2 + 2 \Re (T_m x_1|y_1)\overline{(T_m x_2|y_2)} + |(T_m x_2|y_2)|^2.
\]
By weak stability (and the boundedness of $(T_m x_1|y_1)$), the latter two terms converge to $0$ with respect to $\mathcal{F}_\nu$, so we have
\[
    \lim_{N \to \infty} \int_M \left(2 \Re (T_m x_1|y_1)\overline{(T_m x_2|y_2)} + |(T_m x|y)|^2 \right) \nu_N(\d m) = 0
\]
for all $x, y \in X_2$ by Proposition \ref{prop:koopman-von-neumann}.

For the remaining part on $X_1$, we extend the action of $M$ to an action of $H$: If $h \notin M$, then $h^{-1} \in M$ and we associate to $h$ the operator $(h^{-1})^*$. This gives a well-defined action because the action of $M$ on $X_1$ is unitary and the extended action remains unitary. By Theorem \ref{thm:unitary-group-actions} we have
\[
    \lim_{N \to \infty} \int_H |(T_h x_1 | y_1)|^2 \nu_N(\d h) = \sum_{a \in G} |(P_{1,a} x_1|P_{1,a} y_1)|^2
\]
where $P_{1,a}$ denotes the orthogonal projection onto $\bigcap_{h \in H} \ker (\langle a, h \rangle \operatorname{I} - T_h)$ in $X_1$. As the support of each $\nu_N$ lies in $M$, we can restrict the integration on the left-hand side to $M$ again. The projection does not change when the intersection is taken over $m \in M$ instead because $\langle a, h^{-1} \rangle \operatorname{I} - T_{h^{-1}} = -\langle a, h \rangle^{-1} T_{h^{-1}} (\langle a, h \rangle \operatorname{I} - T_h)$ (where $-\langle a, h \rangle^{-1} T_{h^{-1}}$ is invertible) and hence $\ker (\langle a, h\rangle \operatorname{I} - T_h) = \ker(\langle a, h^{-1} \rangle \operatorname{I} - T_{h^{-1}})$.

Finally, we have to show that the intersection $\bigcap_{m \in M} \ker(\langle a, m \rangle \operatorname{I} - T_m)$ does not change when we interpret $T_m$ as an operator on $X$ instead. By the direct sum decomposition, it suffices to show that this intersection in $0$ when $T_m$ is interpreted as an operator on $X_2$. So let $x \in X_2$ and suppose that $\langle a, m \rangle x - T_m x = 0$ for all $m \in M$; we have to show that $x = 0$. Refine $\mathcal{F}_\nu$ to an ultrafilter $\mathcal{U}$. For every $y \in X_2$ we have
\begin{align*}
    0 
        &= \flim{\mathcal{U}}_m \left( \langle a, m \rangle x - T_m x \,\middle|\, y \right )
        = \flim{\mathcal{U}}_m \langle a, m \rangle (x|y) - \flim{\mathcal{F}_\nu}_m (T_mx|y) \\
        &= (\flim{\mathcal{U}}_m \langle a, m \rangle) (x|y)
\end{align*}
by weak stability. The limit on the right-hand side exists because $\langle a, m \rangle$ is bounded, and its value lies on the unit circle again, hence is non-zero. Thus, $(x|y) = 0$ for every $y \in X_2$, so $x = 0$.
\end{proof}

\printbibliography[heading=bibintoc]

\end{document}